\definecolor{darkred}{RGB}{105,0,0}
\newtheoremstyle{prime}%
 {\item[\hskip\labelsep \theorem@headerfont ##1\ \theorem@separator]}%
{\item[\hskip\labelsep \theorem@headerfont ##1\ ##3' \theorem@separator]}
\newtheoremstyle{proofof}
{\item[\hskip\labelsep \theorem@headerfont ##1\ \theorem@separator]}%
{\item[\hskip\labelsep \theorem@headerfont ##1\ ##3\theorem@separator]}
\newtheorem{theorem}{Theorem}
\newtheorem{lemma}[theorem]{Lemma}
\newtheorem{proposition}[theorem]{Proposition}
\newtheorem{claim}{Claim}
\theoremstyle{prime}
\def \QD1 {\hfill $\spadesuit$}
\newcommand{\DF}[1]{{\bf #1\/}}
\newcommand{\set}[2]{\{#1 \;|\; #2 \}}
\newcommand{\ems}{\varnothing}
\newcommand{\sm}{\setminus}
\newcommand{\De}{\Delta}
\newcommand{\de}{\delta}
\newcommand{\om}{\omega}
\newcommand{\cn}{\chi}
\newcommand{\f}{\varphi}
\newcommand{\cC}{\mathcal{CO}}
\newcommand{\cH}{{\cal H}}
\newcommand{\cc}{{\cal C}}
\numberwithin{equation}{section}
\theoremstyle {nonumberplain}
\newtheorem{proof}{Proof}
\theoremstyle{proofof}
\newtheorem{proofof}{Proof of}
\newtheorem{proof2}{Proof}
\begin{document}
\title{\bf Coloring hypergraphs of low connectivity}

\author{{{
Thomas Schweser}\thanks{The authors thank the Danish Research Council for support by the program Algodisc.}
\thanks{
Technische Universit\"at Ilmenau, Inst. of Math., PF 100565, D-98684 Ilmenau, Germany. E-mail
address: thomas.schweser@tu-ilmenau.de}}
\and{{Michael Stiebitz}\footnotemark[1]
\thanks{
Technische Universit\"at Ilmenau, Inst. of Math., PF 100565, D-98684 Ilmenau, Germany. E-mail
address: michael.stiebitz@tu-ilmenau.de}}
\and{{Bjarne Toft}\footnotemark[1]
\thanks{
University of Southern Denmark, IMADA, Campusvej 55, DK-5320 Odense M, Denmark. E-mail address: btoft@imada.sdu.dk}}}

\date{}
\maketitle

\begin{abstract}
For a hypergraph $G$, let $\chi(G), \Delta(G),$ and $\lambda(G)$ denote the chromatic number, the maximum degree, and the maximum local edge connectivity of $G$, respectively. A result of Rhys Price Jones from 1975 says that every connected hypergraph $G$ satisfies $\chi(G) \leq \Delta(G) + 1$ and equality holds if and only if $G$ is a complete graph, an odd cycle, or $G$ has just one (hyper-)edge. By a result of Bjarne Toft from 1970 it follows that every hypergraph $G$ satisfies $\chi(G) \leq \lambda(G) + 1$. In this paper, we show that a hypergraph $G$ with $\lambda(G) \geq 3$ satisfies $\chi(G) = \lambda(G) + 1$ if and only if $G$ contains a block which belongs to a family $\mathcal{H}_{\lambda(G)}$. The class $\mathcal{H}_3$ is the smallest family which contains all odd wheels and is closed under taking Haj\'os joins. For $k \geq 4$, the family $\mathcal{H}_k$ is the smallest that contains all complete graphs $K_{k+1}$ and is closed under Haj\'os joins.
For the proofs of the above results we use critical hypergraphs. A hypergraph $G$ is called $(k+1)$-critical if $\cn(G)=k+1$, but $\cn(H)\leq k$ whenever $H$ is a proper subhypergraph of $G$. We give a characterization of $(k+1)$-critical hypergraphs having a separating edge set of size $k$ as well as a a characterization of $(k+1)$-critical hypergraphs having a separating vertex set of size $2$.
\end{abstract}

\noindent{\small{\bf AMS Subject Classification:} 05C15 }

\noindent{\small{\bf Keywords:} Hypergraph coloring, Hypergraph connectivity, Critical Hypergraphs, Brooks' Theorem}

\section{Introduction and main results}
In the 1960s, Erd\H os and Hajnal \cite{ErdHaj66} introduced a coloring concept for hypergraphs. A \textbf{coloring} of a hypergraph $G$ with \textbf{color set} $C$ is a function $\varphi:V(G) \to C$ such that for each edge $e \in E(G)$ there are vertices $v,w \in e$ with $\varphi(v) \neq \varphi(w)$. Since each edge of a graph contains exactly two vertices, this concept is a generalization of the usual coloring concept for graphs. The \textbf{chromatic number} $\chi(G)$ of a hypergraph $G$ is the least integer $k$ such that $G$ admits a \textbf{$k$-coloring}, that is, a coloring with color set $\{1,2,\ldots,k\}$. This definition enables the transfer of various famous results on colorings of graphs to the hypergraph case.
But even if one is only interested in graphs, the study of hypergraphs may be of use and help in many cases, as demonstrated for example in \cite{Toft74}. Brooks' theorem \cite{brooks} was extended to hypergraphs by Jones \cite{Jones} in 1975.

\begin{theorem}
Let $G$ be a connected hypergraph. Then, $\chi(G) \leq \Delta(G)+1$ and equality holds if and only if $G$ is a complete graph, an odd cycle, or $G$ contains exactly one edge.
\end{theorem}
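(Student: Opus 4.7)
The plan has two parts: establishing the inequality and then characterizing equality. Throughout, set $k = \Delta(G)$, and say that a color $c$ is \emph{forbidden} at a vertex $v$ under a coloring $\varphi$ of $G - v$ if some edge $e \ni v$ satisfies $\varphi(e \setminus \{v\}) = \{c\}$; extending $\varphi$ to $v$ amounts to choosing a color not in the forbidden set.

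For the inequality I would argue by contradiction via a critical subhypergraph. Assume $\chi(G) \ge k + 2$ and pick a $(k+2)$-critical $H \subseteq G$. For any $v \in V(H)$, criticality yields a $(k+1)$-coloring of $H - v$, and at most $d_H(v) \le \Delta(H) \le k$ of the $k+1$ colors can be forbidden at $v$; hence the coloring extends, contradicting $\chi(H) = k+2$.

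For the equality characterization, the backward direction (verifying $\chi = \Delta + 1$ for $K_n$, odd cycles, and single-edge hypergraphs) is immediate by inspection. For the forward direction, suppose $\chi(G) = k + 1$ and take a $(k+1)$-critical $H \subseteq G$. The same extension argument forces $d_H(v) \ge k$ for every $v \in V(H)$, and since $d_H(v) \le d_G(v) \le k$, we obtain $d_H(v) = d_G(v) = k$ for all such $v$. Consequently every edge of $G$ meeting $V(H)$ lies entirely in $E(H)$, so $V(H)$ is a union of connected components of $G$; connectivity forces $H = G$, so $G$ itself is $(k+1)$-critical and $k$-regular. It therefore remains to classify connected $k$-regular $(k+1)$-critical hypergraphs: for $k = 0$ this is $K_1$, and for $k = 1$ the regularity and connectivity immediately give a single hyperedge.

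The main obstacle is the classification for $k \ge 2$, where one must rule out hyperedges of size $\ge 3$; once this is done, classical Brooks' theorem identifies $G$ as $K_{k+1}$ for $k \ge 3$ and as an odd cycle for $k = 2$. To rule out a large hyperedge $e \ni v$, I would fix a $k$-coloring $\varphi$ of $G - v$ and exploit the rigidity imposed by criticality: every edge $e' \ni v$ must be monochromatic on $e' \setminus \{v\}$ under $\varphi$, and all $k$ colors must occur among these monochromatic sets. I then plan to use a Kempe-chain-style recoloring on a color class intersecting $e \setminus \{v\}$: the extra freedom offered by $e$ (which is satisfied as soon as two of its vertices differ in color) together with $k$-regularity should permit a color swap on a suitable component that frees a color at $v$, yielding the desired contradiction. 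The delicate point, absent in the graph case, is precisely this double role of large hyperedges—they both relax the proper-coloring requirement (making recoloring easier) and multiply the monochromatic-subedge constraints that must be preserved under any swap—so the recoloring must be designed to exploit the former without violating the latter at other edges.
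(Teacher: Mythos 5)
Your inequality argument and your reduction are correct and match the standard route: the greedy extension bound is exactly the argument the paper gives for Proposition~\ref{prop_basic-facts}(a), and your deduction that a $(k+1)$-critical subhypergraph $H$ of a connected $G$ with $\Delta(G)=k$ must satisfy $d_H(v)=d_G(v)=k$ everywhere, hence $H=G$ with $G$ a $k$-regular $(k+1)$-critical hypergraph, is sound (including the point that $d_H(v)=d_G(v)$ forces every $G$-edge meeting $V(H)$ into $E(H)$, so connectivity gives $H=G$). The cases $k=0,1$ are also handled correctly.

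However, there is a genuine gap at precisely the step you flag as the ``main obstacle'': the classification of connected $k$-regular $(k+1)$-critical hypergraphs for $k\geq 2$ is not proved, only planned, and that step is the entire nontrivial content of the theorem. Your plan --- rule out hyperedges of size $\geq 3$ by a Kempe-chain recoloring, then invoke graph Brooks --- is not executed, and it is not routine, for the concrete reason you half-identify: in a hypergraph, swapping two colors $1$ and $c$ on a ``component'' of the vertices colored $1$ or $c$ need not preserve properness, because a hyperedge carrying both colors can become monochromatic when only some of its vertices lie in the swapped component (e.g.\ an edge colored $\{1,1,c\}$ whose $c$-vertex and no $1$-vertex is swapped); indeed the very notion of a two-colored component is ambiguous for hyperedges, so ``a suitable component that frees a color at $v$'' is not a defined object. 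Your local observations (each edge at $v$ forbids a distinct color, and a vertex $u\in e_0\setminus\{v\}$ lies in no other edge through $v$) are correct but do not by themselves produce the swap. The way this is actually done in the literature --- and in this paper's toolkit --- is Lemma~\ref{lemma_G(X)-Gallai} (Kostochka--Stiebitz, generalizing Gallai): one proves that the \emph{shrunken} low-vertex hypergraph $G(L)$ is a Gallai forest, and part~(d) of that lemma, applied with $L=V(G)$ (all vertices low by your regularity argument), says exactly that $G$ is $K_{k+1}$, or $k=2$ and $G$ is an odd cycle, or $k=1$ and $G$ is a single edge. Note the paper itself does not reprove Theorem~1 either --- it cites Jones --- but once Lemma~\ref{lemma_G(X)-Gallai}(d) is granted, your reduction completes the proof immediately; without it (or an honest execution of the recoloring analysis, which amounts to reproving it), your argument for $k\geq 2$ is incomplete.
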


In this paper, we examine the relation between the chromatic number of a hypergraph and its edge connectivity. Let $G$ be a hypergraph with at least two vertices. The \textbf{local edge connectivity} $\lambda_G(v,w)$ of distinct vertices $v,w$ in the hypergraph $G$ is the maximum number of edge-disjoint $(v,w)$-hyperpaths of $G$. The \emph{maximum local edge connectivity} of a hypergraph $G$ is
$$\lambda(G) = \max \{\lambda_G(v,w) ~|~ v,w \in V(G), v \neq w\}.$$
If $G$ has at most one vertex, we set $\lambda(G)=0$. By a result of Toft \cite{Toft70}, each hypergraph $G$ satisfies $\chi(G) \leq \lambda(G) + 1$. Our aim is to characterize the class of hypergraphs for which equality hold. To this end, we use a famous construction by Haj\'os \cite{Hajos61}, which was extended to hypergraphs by Toft~\cite{Toft74}.

Let $G_1$ and $G_2$ be two vertex disjoint hypergraphs and, for $i\in \{1,2\}$, let $e_i \in E(G_i)$ and $v_i \in e_i$. Then, we create a new hypergraph $G$ by deleting $e_1$ and $e_2$, identifying the vertices $v_1$ and $v_2$ to a new vertex $v^*$, and adding a new edge $e^* \in E(G)$ either with $e^* = (e_1 \cup e_2) \setminus \{v_1,v_2\}$ or with $e^*=(e_1 \cup e_2 \cup v^*) \setminus \{v_1,v_2\}$. Then, $G$ is a \textbf{Haj\'os join} of $G_1$ and $G_2$ and we write $G=(G_1,v_1,e_1) \Delta (G_2,v_2,e_2)$ or, briefly, $G=G_1 \Delta G_2$. Figure~\ref{fig_haj-join} shows the two possible Haj\'os joins of two $K_4$.

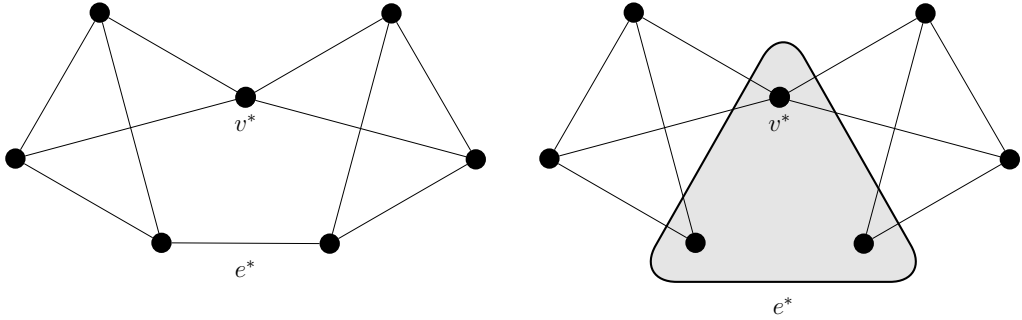
\begin{figure}
\centering
\resizebox{\linewidth}{!}{

\begin{tikzpicture} [node distance=1cm, bend angle=20,
vertex/.style={circle,minimum size=3mm,very thick, draw=black, fill=black, inner sep=0mm}, information text/.style={fill=red!10,inner sep=1ex, font=\Large}, help lines/.style={-,color=black}]

\node[draw=none,minimum size=4cm,regular polygon,regular polygon sides=4, rotate=-30] (a) {};
\foreach \x in {1,2,3,4}
\node[vertex] (a\x) at (a.corner \x) {};
\node [label={below:$v^*$}] at (a1) {};

\node[draw=none,minimum size=4cm,regular polygon,regular polygon sides=4, rotate=30, xshift=3.35cm, yshift=-1.95cm](b){};
\foreach \x in {1,2,3,4}
\node[vertex] (b\x) at (b.corner \x) {};

\path[-]

(a1)
	 edge[help lines] (a2)
	 edge[help lines] (a3)
(a2) edge[help lines] (a3)
	 edge[help lines] (a4)
(a3) edge[help lines] (a4)

(b1)
	 edge[help lines] (b2)
	 edge[help lines] (b3)
	 edge[help lines] (b4)
(b2) edge[help lines] (b4)
(b3) edge[help lines] node[midway, label={below:$e^*$}]{} (a4)
	 edge[help lines] (b4);

\begin{scope}[xshift=9cm]

\node[draw=none,minimum size=4cm,regular polygon,regular polygon sides=4, rotate=-30] (a) {};
\foreach \x in {1,2,3,4}
\node[vertex] (a\x) at (a.corner \x) {};
\node [label={below:$v^*$}] at (a1) {};

\node[draw=none,minimum size=4cm,regular polygon,regular polygon sides=4, rotate=30, xshift=3.35cm, yshift=-1.95cm](b){};
\foreach \x in {1,2,3,4}
\node[vertex] (b\x) at (b.corner \x) {};
\node at (2,-3) {$e^*$};

\path[-]

(a1)
	 edge[help lines] (a2)
	 edge[help lines] (a3)
(a2) edge[help lines] (a3)
	 edge[help lines] (a4)
(a3) edge[help lines] (a4)

(b1)
	 edge[help lines] (b2)
	 edge[help lines] (b3)
	 edge[help lines] (b4)
(b2) edge[help lines] (b4)
(b3) edge[help lines] (b4);

\begin{pgfonlayer}{background}
\filldraw[fill=black!50!, fill opacity=.2,rounded corners=20pt, line width=1pt, draw=black] (-0.5,-2.6)--(4.5,-2.6)--(2,1.8)--cycle;

\end{pgfonlayer}

\end{scope}

\end{tikzpicture}
%
\caption{The two possible Haj\'os joins of two $K_4$.}
\label{fig_haj-join}

\end{figure}

For an integer $k \geq 3$ we define a class $\mathcal{H}_k$ of hypergraphs as follows.
Let $\mathcal{H}_3$ be the smallest class of hypergraphs that contains all odd wheels and is closed under taking Haj\'{o}s joins. Moreover, for $k \geq 4$, let $\mathcal{H}_k$ be the smallest class of hypergraphs that contains all complete graphs of order $k+1$ and is closed under taking Haj\'{o}s joins.

Recall that a \textbf{block} of a hypergraph $G$ is a maximal connected subhypergraph of $G$ that does not contain a separating vertex. It is well known that any two blocks of $G$ have at most one vertex in common. In particular,
\begin{align}\label{eq_block-chi}
\chi(G) = \max \{\chi(B) ~ | ~ B \text{ is a block of }  G\}.
\end{align}
This is due to the fact that if we have optimal colorings of the blocks of $G$, then, by permuting the colors in the blocks, we can create an optimal coloring of $G$.

The next theorem is the main result of this paper, it is a generalization of Brooks' theorem for hypergraphs. The graph-counterpart was proven by Aboulker, Brettell, Havet, Marx, and Trotignon \cite{AlboukerV2016} for $\lambda(G)=3$ and by Stiebitz and Toft \cite{StiToft18} for $\lambda(G) \geq 4$.
\begin{theorem}\label{theorem_main-result}
Let $G$ be a hypergraph with $\lambda(G) \geq 3$. Then, $\chi(G) \leq \lambda(G) + 1$ and equality holds if and only if $G$ has a block belonging to the class $\mathcal{H}_{\lambda(G)}$.
\end{theorem}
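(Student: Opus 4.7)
The plan is to use the bound $\chi(G) \leq \lambda(G) + 1$ from Toft's 1970 result and to concentrate on the characterization of equality. For the ``if'' direction, I would verify by induction along the construction of $\mathcal{H}_k$ that every $H \in \mathcal{H}_k$ satisfies $\chi(H) = k+1$ and $\lambda(H) = k$: the base cases $K_{k+1}$ (for $k \geq 4$) and odd wheels (for $k = 3$) are classical, and the fact that a Haj\'os join of two $(k+1)$-chromatic hypergraphs is again $(k+1)$-chromatic while keeping $\lambda = k$ is a standard property that extends from graphs to hypergraphs via Toft's framework. Given a block $B \in \mathcal{H}_{\lambda(G)}$ of $G$, the block formula~\eqref{eq_block-chi} then yields $\chi(G) \geq \chi(B) = \lambda(G) + 1$.

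For the ``only if'' direction, set $k = \lambda(G)$ and assume $\chi(G) = k+1$. By~\eqref{eq_block-chi}, some block $B$ of $G$ has $\chi(B) = k+1$, and since $B \subseteq G$ we have $\lambda(B) \leq k$, while Toft's inequality applied to $B$ gives $\lambda(B) \geq k$, so $\lambda(B) = k$. I would then establish the key auxiliary claim that such a block is automatically $(k+1)$-critical. Take a $(k+1)$-critical subhypergraph $B^{\ast} \subseteq B$. The classical fact that every $(k+1)$-critical hypergraph is $k$-edge-connected, combined with $\lambda(B^{\ast}) \leq \lambda(B) = k$, forces $\lambda_{B^{\ast}}(v,w) = k$ for every pair $v,w$. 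If $B^{\ast} \subsetneq B$, any extra edge $e$ of $B$ and any two vertices $v_1, v_2 \in e$ would yield an additional edge-disjoint $(v_1,v_2)$-path in $B$ beyond the $k$ paths available in $B^{\ast}$, forcing $\lambda_B(v_1,v_2) \geq k+1$; this contradicts $\lambda(B) = k$. Hence $B = B^{\ast}$ is $(k+1)$-critical with $\lambda(B) = k$.

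Now I would show $B \in \mathcal{H}_k$ by induction on $|V(B)|$. Since $\lambda(B) = k$, there exist vertices $v,w$ with $\lambda_B(v,w) = k$, so by Menger's theorem $B$ admits a separating edge set of size $k$. If $B$ has a separating vertex set of size $2$, the corresponding characterization promised in the abstract decomposes $B$ as a Haj\'os join $B_1 \Delta B_2$ of two smaller $(k+1)$-critical hypergraphs, each of which has $\lambda \leq k$ and hence $\lambda = k$ by Toft; otherwise the edge-separator characterization provides the same decomposition, unless $B$ is one of the base members of $\mathcal{H}_k$. The induction hypothesis applied to $B_1, B_2$ yields $B_i \in \mathcal{H}_k$, and since $\mathcal{H}_k$ is closed under Haj\'os joins, we conclude $B \in \mathcal{H}_k$.

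The main obstacle I expect is establishing the two structural characterizations of critical hypergraphs on which the induction rests, specifically ensuring that every decomposition produces constituents $B_1, B_2$ that are both $(k+1)$-critical and satisfy $\lambda(B_i) \leq k$, so that the induction genuinely closes. The case $k = 3$ is particularly delicate, since odd wheels are new base cases not appearing for $k \geq 4$; one must show that any $4$-critical hypergraph with $\lambda = 3$ that admits no proper Haj\'os decomposition and no small vertex separator is either $K_4$ or an odd wheel. Verifying this likely requires combining both characterizations together with an analysis specific to hyperedges of size at least three.
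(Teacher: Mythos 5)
Your outline follows the paper's overall strategy (pass to a critical subhypergraph, show it coincides with a block, prove $\mathcal{C}_k=\mathcal{H}_k$ by induction on the order via Haj\'os decompositions), but the step carrying all the difficulty is proposed with a mechanism that fails. For a block with no separating vertex set of size at most $2$ you write that ``the edge-separator characterization provides the same decomposition'' into a Haj\'os join unless $B$ is a base member. It does not: the decomposition along a size-$k$ edge cut (Theorem~\ref{theorem_edge-cut}) produces $G[X]$ with a hyperedge added on $X_F$ and $G[Y]$ with a new vertex attached to the cut edges, and $B$ cannot in general be reassembled from these pieces as a Haj\'os join --- in this setting a Haj\'os join arises only from a separating set consisting of one vertex and one edge (Theorem~\ref{theorem_sep-set}). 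The actual classification of the base case (Claim~\ref{claim_sep-size>2}) is a minimal-counterexample argument resting on the Gallai-tree structure of the low vertices (Lemma~\ref{lemma_G(X)-Gallai}) and the one-high-vertex lemma (Lemma~\ref{lemma_|G_H|=1}); Theorem~\ref{theorem_edge-cut} enters only afterwards, applied to a cut separating two high vertices so as to shrink the counterexample while staying in $\mathcal{C}_k$ and preserving the absence of a $2$-separator. None of this low-vertex analysis appears in your sketch (you acknowledge it as the main obstacle), and without it the induction has no base case, so this is not a technicality. Two further claims you label ``standard'' are in fact substantive and needed in both directions for the induction to close: that a Haj\'os join lies in $\mathcal{C}_k$ exactly when both factors do is the path-surgery argument of Claim~\ref{claim_ck-hajos} (rerouting the unique hyperpath through $e^*$ via $e_1$, respectively replacing $e_1$ by $e^*$ followed by a hyperpath in the critical partner avoiding $e_2$, which exists by Proposition~\ref{prop_basic-facts}(b)); and a $2$-vertex separator does not directly yield a Haj\'os decomposition --- Theorem~\ref{theorem_gal+dir} yields $G_1+vw$ and $G_2$ with $v,w$ identified, which are not the join factors. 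The paper's Claim~\ref{claim_sep-size2} instead shows that in $\mathcal{C}_k$ a pure $2$-vertex separator is impossible: $\lambda_{G_1}(v,w)\geq k-1$ forces $\lambda_{G_2}(v,w)\leq 1$, hence a separating edge of $G_2$ and therewith a vertex-plus-edge separator, and only then does Theorem~\ref{theorem_sep-set} produce the join.

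A smaller, fixable gap: your argument that the block $B$ is itself $(k+1)$-critical treats only an extra edge $e$ together with two of its vertices, which implicitly assumes $|e\cap V(B^{\ast})|\geq 2$. If $B^{\ast}$ is proper because $B$ has extra vertices, or because some edge of $E(B)\setminus E(B^{\ast})$ meets $V(B^{\ast})$ in at most one vertex, your argument says nothing. The paper's formulation covers this: if the critical subhypergraph $H$ is a proper subhypergraph of a block of $G$, then some two vertices $v,w$ of $H$ are joined by a hyperpath of $G$ edge-disjoint from $E(H)$, and combined with $\lambda_H(v,w)\geq k$ (Proposition~\ref{prop_basic-facts}(b)) this gives $\lambda(G)\geq k+1$, a contradiction. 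You need this hyperpath form of the argument, not merely the single-edge case.
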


Note that for $\lambda(G) \in \{0,1\}$, it is obvious that a connected hypergraph $G$ satisfies $\chi(G) = \lambda(G) + 1$ if and only $\lambda(G)=0$ and $G=K_1$, or $\lambda(G)=1$ and each block of $G$ consists of just one edge. The case $\lambda(G)=2$ has not yet been solved in a satisfactory way, that is, we do not know with certainty what $\cH_2$ is.

\section{Notation and Basic Concepts}
A \textbf{hypergraph} is a pair $G=(V,E)$, where $V$ and $E$ are two finite sets, $E \subseteq 2^V$, and $|e| \geq 2$ for all $e \in E$. Then, $V(G)=V$ is the \textbf{vertex set} of $G$ and its elements are the \textbf{vertices} of $G$. Furthermore, $E(G)=E$ is the \textbf{edge set} of $G$; its elements are the \textbf{edges} of $G$. The \textbf{empty} hypergraph is the hypergraph $G$ with $V(G)=E(G)=\ems$; we denote it by $G=\ems$. A \textbf{simple} hypergraph is a hypergraph in which no edge is contained in another edge. Note that hypergraphs in this paper have no multiple edges.

For a hypergraph $G$ we use the following notation. The \textbf{order} $|G|$ of $G$ is the number of vertices of $G$. Let $e$ be an arbitrary edge of $G$. If $|e| \geq 3$, the edge $e$ is said to be a \textbf{hyperedge}, otherwise, for $|e|=2$, $e$ is an \textbf{ordinary} edge.  If $e$ is an ordinary edge of $G$ with $e=\{v,w\}$, we briefly write $e=vw$ and $e=wv$. As usual, we write $G=K_n$ if $G$ is a complete graph of order $n$ and $G=C_n$ if $G$ is a cycle of order $n$ consisting only of ordinary edges. A cycle is called \textbf{odd} or \textbf{even} depending on whether its order is odd or even. An \textbf{odd wheel} is a graph obtained from an odd cycle by adding one vertex and joining it to all others. A \textbf{hyperwheel} is a hypergraph obtained from an edge by adding one vertex and joining it to all vertices of the edge by ordinary edges.


For a hypergraph $G$ and a vertex set $X \subseteq V(G)$, let
$$\partial_G(X)=\set{e\in E(G)}{e \cap X \neq \varnothing ~\text{and}~ e \cap (V(G) \setminus X) \neq \varnothing}.$$
If $X=\{v\}$ is a singleton, we just write $\partial_G(v)$.
The \textbf{degree} of $v$ in $G$ is defined as $d_G(v)=|\partial_G(v)|$.  As usual, $\de(G)=\min_{v\in V(G)} d_G(v)$ is the \DF{minimum degree} of $G$ and $\De(G)=\max_{v\in V(G)}d_G(v)$ is the \DF{maximum degree} of $G$. If $G$ is empty, we set $\de(G)=\De(G)=0$. A non-empty hypergraph $G$ is said to be $r$-\textbf{regular} or, briefly,   \textbf{regular} if each vertex in $G$ has degree $r$.

A hypergraph $G'$ is a \DF{subhypergraph} of $G$, written $G'\subseteq G$, if $V(G')\subseteq V(G)$ and $E(G') \subseteq E(G)$. Moreover, $G'$ is a \textbf{proper} subhypergraph of $G$, if $G' \subseteq G$ and $G' \neq G$. Let $G_1$ and $G_2$ be two hypergraphs. Then, $G_1 \cup G_2$ denotes the \textbf{union} of $G_1$ and $G_2$, that is, the hypergraph $G'$ with $V(G')=V(G_1) \cup V(G_2)$, and $E(G')=E(G_1)\cup E(G_2)$. Similarly, $G'=G_1 \cap G_2$ denotes the \textbf{intersection} of $G_1$ and $G_2$, where $V(G')=V(G_1) \cap V(G_2)$ and $E(G')=E(G_1)\cap E(G_2)$.

Let $G$ be a hypergraph and let $X \subseteq V(G)$ be a vertex set. We consider two new hypergraphs. First, $G[X]$ is the subhypergraph of $G$ with
$$V(G[X])=X \text{ and } E(G[X])=\set{e\in E(G)}{e\subseteq X}.$$
We say that $G[X]$ is the subhypergraph of $G$ \textbf{induced} by $X$.  More general, a hypergraph $G'$ is said to be an \textbf{induced subhypergraph} of $G$ if $V(G') \subseteq V(G)$ and $G'=G[V(G')]$. Secondly, $G(X)$ is the hypergraph with
$$V(G(X))=X \text{ and } E(G(X))=\set{e \cap X}{e \in E(G) ~\text{and}~ |e \cap X|\geq 2}.$$
We say that $G(X)$ is the hypergraph obtained by \textbf{shrinking} $G$ \textbf{to} $X$. Note that $G(X)$ does not necessarily need to be a subhypergraph of $G$. As usual, we define $G-X=G[V(G) \setminus X]$ and $G \div X = G(V(G) \setminus X)$. For the sake of readability, if $X=\{v\}$ for some vertex $v$, we will write $G - v$ and $G \div v$ instead of $G - X$ and $G \div X$. To obtain the reverse operation to $G-v$, let $G'$ be a proper induced subhypergraph of $G$ and let $v \in V(G) \setminus V(G')$. Then, $G'+v=G[V(G') \cup \{v\}]$. If $F \subseteq E(G)$ is an edge set, then let $G-F$ be the hypergraph that results from $G$ by deleting all edges from $F$. If $F=\{e\}$ is a singleton, we write $G-e$ rather than $G-F$.

Let $G$ be a non-empty hypergraph. A $(v,w)$-\textbf{hyperpath} in $G$ is a sequence $(v_1, e_1, v_2, e_2, \ldots,e_{q-1}, v_q)$ of distinct vertices
$v_1, v_2, \ldots, v_{q}$ of $G$ and distinct edges
$e_1, e_2, \ldots, e_{q-1}$ of $G$ such that $v=v_1$, $w=v_q$ and
$\{v_i,v_{i+1}\}\subseteq e_i$ for $i\in \{1, 2, \ldots, q-1\}$. If $u$ and $u'$ are vertices contained in a hyperpath $P$, we will write $uPu'$ in order to denote the $(u,u')$-subhyperpath of $P$. Two hyperpaths are \textbf{edge-disjoint} if the edges from one are all different from the edges of the other. The hypergraph $G$ is \textbf{connected} if there is a hyperpath in $G$ between any two of its vertices. A (connected) \textbf{component} of $G$ is a maximal connected subhypergraph of $G$.

A \textbf{separating vertex set} of $G$ is a set $S \subseteq V(G)$ such that $G$ is the union of two induced subhypergraphs $G_1$ and $G_2$ with $V(G_1) \cap V(G_2) = S$ and $|G_i| > |S|$ for $i \in \{1,2\}$. If $S=\{v\}$ is a singleton, we say that $v$ is a \textbf{separating vertex} of $G$. Note that $S$ is a separating vertex set if and only if $G \div S$ has more components than $G$.  Finally, a \textbf{block} of $G$ is a maximal connected subhypergraph of $G$ that has no separating vertex. Thus, every block of $G$ is a connected induced subhypergraph of $G$. It is easy to see that two blocks of $G$ have at most one vertex in common, and that a vertex $v$ is a separating vertex of $G$ if and only if it is contained in more than one block.

A \textbf{separating edge set} of $G$ is a set $F \subseteq E(G)$ such that $G-F$ has more components than $G$. If $F$ is a separating edge set and there is no proper subset of $F$ that is a separating edge set, as well, $F$ is said to be a \textbf{minimal separating edge set}. It is well known that if $F$ is a minimal separating edge set of a connected hypergraph $G$, then $F = \partial_G(X)$ for some non-empty proper subset $X$ of $V(G)$. An edge $e$ is a \textbf{bridge} of a hypergraph $G$ if $G-e$ has $|e|-1$ more components than $G$. Note that an edge $e$ is a bridge if and only if each vertex from $e$ belongs to a different component of $G-e$.

A hypergraph $G$ is $k$\textbf{-edge-connected} for an integer $k\geq 1$ if $|G| \geq 2$ and $G-F$ is connected for any set $F \subseteq E(G)$ with $|F| \leq k-1$. It is well known that Menger's Theorem also holds for hypergraphs (see \cite[Theorem~2.5.28]{Frank11} and \cite{Kir03}).

\begin{theorem}\label{theorem_menger}
If $G$ is a hypergraph and $v, w$ are distinct vertices of $G$, then $$\lambda_G(v,w)=\min \{|\partial_G(X)|~|~ v \in X \subseteq V(G)\setminus \{w\}\}.$$
\end{theorem}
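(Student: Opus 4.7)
The plan is to establish both inequalities, the trivial one by a direct argument and the other by reducing to the classical max-flow/min-cut theorem for graphs.

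For the upper bound $\lambda_G(v,w)\le\min_X|\partial_G(X)|$, fix any $X$ with $v\in X\subseteq V(G)\setminus\{w\}$ and any $(v,w)$-hyperpath $P=(v_1,e_1,v_2,\ldots,e_{q-1},v_q)$. Let $i$ be the smallest index with $v_{i+1}\notin X$ (which exists because $v_q=w\notin X$); then $v_i\in X$ and $e_i\in\partial_G(X)$. Pairwise edge-disjoint hyperpaths contribute distinct edges of $\partial_G(X)$, so any such family has size at most $|\partial_G(X)|$, whence $\lambda_G(v,w)\le\min_X|\partial_G(X)|$.

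For the reverse inequality, I would translate the hypergraph problem into a graph problem via the incidence bipartite graph $G^{\ast}$ with vertex set $V(G)\cup\{x_e:e\in E(G)\}$ and edge set $\{ux_e:u\in e,\ e\in E(G)\}$. Since $G^{\ast}$ is bipartite with parts $V(G)$ and $\{x_e\}$, every $(v,w)$-path in $G^{\ast}$ has the alternating form $v_1,x_{e_1},v_2,x_{e_2},\ldots,x_{e_{q-1}},v_q$ with $v_1=v$, $v_q=w$, pairwise distinct $v_i$'s, pairwise distinct $e_i$'s, and $\{v_i,v_{i+1}\}\subseteq e_i$ (because $x_{e_i}$ is adjacent only to the vertices of $e_i$). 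This furnishes a bijection between $(v,w)$-hyperpaths of $G$ and $(v,w)$-paths of $G^{\ast}$, under which two hyperpaths are edge-disjoint in $G$ if and only if the corresponding paths in $G^{\ast}$ share no hyperedge-vertex $x_e$.

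Now view $G^{\ast}$ as a flow network in which each hyperedge-vertex $x_e$ has capacity $1$ and each original vertex has capacity $\infty$. By the vertex-capacitated max-flow/min-cut theorem (reduced to the classical edge version of Menger by orienting each edge in both directions and splitting each $x_e$ into two copies joined by a bottleneck arc of capacity $1$), the maximum number of pairwise $x_e$-disjoint $(v,w)$-paths in $G^{\ast}$ equals the minimum size of a set $\{x_e:e\in F\}$ whose removal separates $v$ from $w$ in $G^{\ast}$. Translating back, the left-hand side is $\lambda_G(v,w)$, and the right-hand side equals $\min\{|F|:F\subseteq E(G),\ v\text{ and }w\text{ lie in different components of }G-F\}$, which in turn equals $\min_X|\partial_G(X)|$: if $F$ separates $v$ and $w$ and $X$ is the component of $v$ in $G-F$, then $\partial_G(X)\subseteq F$, while conversely every $\partial_G(X)$ is a separating edge set.

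The only nontrivial step is the appeal to the vertex-capacitated max-flow/min-cut theorem; the gadget that splits $x_e$ into a bottleneck edge is routine, but it must be set up carefully because $x_e$ has arbitrary (even large) degree in $G^{\ast}$ and because the graph is undirected, so one should bidirect the edges of $G^{\ast}$ first and then appeal to the standard directed edge version of Menger.
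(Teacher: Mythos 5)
Your proposal is correct, but note that the paper does not actually prove Theorem~\ref{theorem_menger} at all: it quotes the result as known, citing Frank's monograph and Kir\'aly's thesis, so there is no in-paper argument to match. Your route --- the trivial direction via the first edge of each hyperpath crossing out of $X$, and the hard direction via the bipartite incidence graph $G^{\ast}$ with unit capacities on the hyperedge-vertices $x_e$ and a splitting gadget reducing to directed edge-Menger --- is the standard self-contained derivation, and it is essentially the kind of reduction underlying the cited sources. The details check out: since $G^{\ast}$ is bipartite with $v,w$ on the vertex side, every $(v,w)$-path alternates and must pass through some $x_e$, so a finite cut among the $x_e$'s exists and the vertex-capacitated min-cut consists only of bottleneck arcs; disjointness on the $x_e$'s corresponds exactly to edge-disjointness of hyperpaths (paths may share original vertices, which is allowed, since edge-disjoint hyperpaths need not be internally vertex-disjoint); and your final translation between separating edge sets $F$ and cuts $\partial_G(X)$ is right, because any edge of $\partial_G(X)$ with $X$ the component of $v$ in $G-F$ would otherwise lie entirely inside that component. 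Two small points of care: ``bijection'' is slightly loose --- what you need and have is a correspondence between hyperpath systems and $x_e$-disjoint path systems preserving cardinality --- and the infinite capacities should formally be replaced by a finite bound such as $|E(G)|+1$ before invoking integral max-flow/min-cut, which you implicitly acknowledge. What your write-up buys over the paper's citation is a short, elementary, self-contained proof from the classical graph Menger theorem; what the citation buys is access to the more general framework (e.g., Frank's treatment) in which local edge-connectivity of hypergraphs is developed systematically.
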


\section{Connectivity of critical hypergraphs}

In order to prove Theorem~\ref{theorem_main-result}, we use the concept of \textbf{critical hypergraphs}. Critical graphs were introduced by Dirac in his Ph.D. thesis and the resulting papers \cite{Dir52} and \cite{Dir53}. His concept was extended to hypergraphs by Lov\'asz \cite{Lov68}. We say that a hypergraph $G$ is $(k+1)$\textbf{-critical} or, briefly, \textbf{critical} if $\chi(G)=k+1$, but $\chi(H) \leq k$ for any proper subhypergraph $H$ of $G$. Critical hypergraphs are a useful concept in chromatic number theory as many problems can be reduced to critical hypergraphs. In particular, each hypergraph $G$ contains a critical hypergraph $H$ with $\chi(H) = \chi(G)$. The next two propositions state some well known facts about critical hypergraphs.

\begin{proposition} \label{prop_G-e}
Let $G$ be a connected hypergraph and let $k \geq 0$ be an integer. Then, $G$ is $(k+1)$-critical if and only if $\chi(G-e) \leq k < \chi(G)$ for each edge $e \in E(G)$.
\end{proposition}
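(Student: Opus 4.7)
The forward direction follows directly from the definition of criticality: if $G$ is $(k+1)$-critical then $\chi(G)=k+1$, and for any $e\in E(G)$ the hypergraph $G-e$ is a proper subhypergraph of $G$ (same vertex set, one fewer edge), so $\chi(G-e)\le k$. Together this gives $\chi(G-e)\le k<\chi(G)$ for every edge $e$.

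For the reverse direction, suppose $\chi(G-e)\le k<\chi(G)$ holds for every $e\in E(G)$. I will first verify that $\chi(G)=k+1$, and then show that every proper subhypergraph $H$ of $G$ satisfies $\chi(H)\le k$. To bound $\chi(G)$ from above, fix an edge $e\in E(G)$ (the degenerate case $E(G)=\varnothing$ combined with connectedness forces $|G|\le 1$, which together with $\chi(G)>k\ge 0$ leaves only $|G|=1$, $\chi(G)=1$, $k=0$, and the proposition is trivial). Take a $k$-coloring $\varphi$ of $G-e$; if $\varphi$ is already a coloring of $G$, we have $\chi(G)\le k$, contradicting the hypothesis, so $e$ is monochromatic under $\varphi$. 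Recolor one vertex of $e$ with a new color $k+1$ to obtain a valid $(k+1)$-coloring of $G$. Hence $\chi(G)\le k+1$, and combining with $\chi(G)>k$ yields $\chi(G)=k+1$.

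Now let $H$ be any proper subhypergraph of $G$. The key observation is that one can always find an edge $e\in E(G)\setminus E(H)$: if $E(H)\subsetneq E(G)$, pick $e$ directly; otherwise $E(H)=E(G)$ and $V(H)\subsetneq V(G)$, in which case any $v\in V(G)\setminus V(H)$ is incident to some edge $e$ of $G$ (by connectivity of $G$ together with $|G|\ge 2$, using that the case $|G|\le 1$ would force $E(G)=\varnothing$ and hence $H=G$), and this edge cannot be an edge of $H$ since $e\not\subseteq V(H)$, contradicting $E(H)=E(G)$. With such an edge $e$ in hand, $H\subseteq G-e$ and therefore $\chi(H)\le\chi(G-e)\le k$, as required. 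There is no real obstacle here; the only subtlety is bookkeeping in the case $E(H)=E(G)$, $V(H)\subsetneq V(G)$, which is dispatched by noting that a connected hypergraph of order at least two has no isolated vertices.
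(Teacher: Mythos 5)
Your proof is correct, and there is nothing in the paper to diverge from: the paper states Proposition~\ref{prop_G-e} without proof, as a ``well known'' fact, and your argument is exactly the standard one being taken for granted --- the forward direction from the definition of criticality, and the converse by extending a $k$-coloring of $G-e$ with a fresh color $k+1$ on one vertex of the necessarily monochromatic edge $e$ (which spoils no other edge, since that vertex is then the unique vertex of its color and every edge has size at least $2$), together with the observation that, by connectivity, every proper subhypergraph $H$ of $G$ avoids some edge of $G$, whence $H\subseteq G-e$ and $\chi(H)\le k$. The only blemish is the aside that $|G|\le 1$ would force $H=G$ --- in fact $H$ could be the empty hypergraph --- but since you dispatched the case $E(G)=\varnothing$ separately and $\chi(\varnothing)=0\le k$ anyway, this is harmless.
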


It is easy to see that $K_1$ is the only $1$-critical hypergraph and that the only $2$-critical hypergraphs are the connected hypergraphs that contain only one edge. Regarding graphs, it is also easy to obtain that the only $3$-critical graphs are the odd cycles. However, it seems unlikely that there is a good characterization of $3$-critical hypergraphs as even the decision
whether a given hypergraph $G$ satisfies $\chi(G) \leq 2$ is ${\sf NP}$-complete (see \cite{Lov73}).

\begin{proposition} \label{prop_basic-facts}
Let $G$ be a $(k+1)$-critical hypergraph for some integer $k \geq 0$. Then, the following statements hold:
\begin{itemize}
\item[\upshape (a)] $\delta(G) \geq k$, in fact each vertex $v$ is contained in $k$ edges having pairwise only $v$ in common.
\item[\upshape (b)] If $k \geq 1$, then $G$ is $k$-edge-connected. In particular, $\lambda_G(v,w) \geq k$ for distinct vertices $v,w \in V(G)$.
\item[\upshape (c)] $G$ is a block.
\item[\upshape (d)] $G$ is a simple hypergraph.
\end{itemize}
\end{proposition}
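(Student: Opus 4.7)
The plan is to dispatch all four parts using the same basic tool: every proper subhypergraph of $G$ admits a $k$-coloring, and we try to extend or glue such colorings to a $k$-coloring of $G$, contradicting $\chi(G)=k+1$.

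I would begin with (d) as a warm-up. If $e\subsetneq e'$ are edges of $G$, take a $k$-coloring $\varphi$ of the proper subhypergraph $G-e'$; this is already a valid coloring of $G$, because the edge $e\in E(G-e')$ is not monochromatic under $\varphi$, forcing $e'\supseteq e$ to be non-monochromatic too. For (c), I first note that if $G$ were disconnected, identity (\ref{eq_block-chi}) would put a component of chromatic number $k+1$ inside $G$ as a proper subhypergraph, a contradiction. If instead $v$ were a separating vertex, decompose $G=G_1\cup G_2$ with $V(G_1)\cap V(G_2)=\{v\}$ and each $G_i$ a proper induced subhypergraph; take $k$-colorings of $G_1$ and $G_2$ and permute the colors of the second so that $v$ receives the same color on both sides. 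Because every edge of $G$ lies entirely within some $G_i$, the glued map is a valid $k$-coloring of $G$.

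For (a), given $v\in V(G)$, take a $k$-coloring $\varphi$ of the proper subhypergraph $G-v$. If some color $c\in[k]$ could be assigned to $v$ without creating a monochromatic edge, we would obtain a $k$-coloring of $G$; so every color $c$ must be obstructed by an edge $e_c\ni v$ on which $\varphi$ is constantly $c$ on $e_c\sm\{v\}$. For $c\neq c'$ the edges $e_c$ and $e_{c'}$ are distinct and cannot share a vertex other than $v$, since such a vertex would have to be colored both $c$ and $c'$. This yields $k$ edges through $v$ that pairwise meet only in $v$, giving (a).

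Part (b) is the main obstacle. By Theorem~\ref{theorem_menger} it suffices to show $|\partial_G(X)|\geq k$ for every nonempty proper $X\subsetneq V(G)$. Suppose some $X$ violates this and take $k$-colorings $\varphi_1,\varphi_2$ of the proper induced subhypergraphs $G[X]$ and $G-X$. For each permutation $\pi$ of $[k]$, define $\varphi_\pi$ as $\varphi_1$ on $X$ and $\pi\circ\varphi_2$ on $V(G)\sm X$. Edges contained in $X$ or in $V(G)\sm X$ are automatically non-monochromatic; a boundary edge $e\in\partial_G(X)$ can become monochromatic only if $\varphi_1$ is constant $c(e)$ on $e\cap X$, $\varphi_2$ is constant $d(e)$ on $e\sm X$, and $\pi(d(e))=c(e)$. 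For a uniformly random $\pi$ the last equation holds with probability $1/k$, so by linearity of expectation the expected number of monochromatic boundary edges is at most $|\partial_G(X)|/k\leq (k-1)/k<1$, and some $\pi$ achieves zero—producing a $k$-coloring of $G$ and the desired contradiction. The key subtlety, absent in the graph case, is the observation that a hyperedge crossing $X$ can cause trouble only when each of its two sides is already monochromatic in the original colorings, which reduces the hypergraph situation to the same counting argument used for graphs.
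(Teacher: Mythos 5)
Your proofs of (a), (c), and (d) are correct and essentially the paper's own: the paper dispatches these in a few lines ((a) by exactly your extension argument at a vertex $v$, (c) as a direct consequence of \eqref{eq_block-chi}, (d) as obvious), and you merely spell out the details, including the pairwise-intersection claim in (a) that the paper asserts without proof. The genuine divergence is part (b). The paper never proves (b) directly; it cites Toft and defers to Theorem~\ref{theorem_edge-cut}, whose proof constructs an auxiliary graph $H$ made of two $k$-cliques recording which color pairs appear on the edges of the cut, and then invokes Lemma~\ref{Le:perfect} (perfectness of complements of bipartite graphs) to force $|F|=k$ and to pin down the structure of the cut. You instead take $k$-colorings of the proper subhypergraphs $G[X]$ and $G-X$ and average over all $k!$ permutations applied to one side: a boundary edge can be monochromatic only if both of its sides are monochromatic in the base colorings, in which case the event $\pi(d(e))=c(e)$ has probability $1/k$, so $|\partial_G(X)|\leq k-1$ gives expected monochromatic count at most $(k-1)/k<1$ and some $\pi$ yields a $k$-coloring of $G$, a contradiction. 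This is correct (including the reduction, via Theorem~\ref{theorem_menger}, of both the edge-connectivity claim and the local-connectivity claim to the cut bound $|\partial_G(X)|\geq k$), and it is more elementary and self-contained than the paper's route, avoiding the perfect-graph lemma entirely. What it buys less of: the paper's detour yields, beyond the inequality, the full structure of size-$k$ cuts in critical hypergraphs --- every $k$-coloring of $G[X]$ makes $X_F$ monochromatic, $Y_F$ receives all $k$ colors with each vertex of $Y_F$ in exactly one cut edge, and the two derived hypergraphs are again critical --- structure the paper needs later in the proofs of Theorem~\ref{theorem_ck=hk} and the splitting theorems. For Proposition~\ref{prop_basic-facts} itself, your argument fully suffices.
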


Statement (a) follows from the fact that there is a coloring of $G-v$ with color set $C=\{1,2,\ldots,k\}$. This coloring, however, cannot be extended to a $k$-coloring of $G$, and therefore for each color $\alpha\in C$ there is an edge in $\partial_G(v)$ where all vertices have color $\alpha$, except $v$. This proves (a). Statement (b) was proved by Toft in \cite{Toft74}; we also give a proof in Theorem~\ref{theorem_edge-cut}. Statement (c) is a direct consequence of \eqref{eq_block-chi}, and (d) is obvious.

Proposition~\ref{prop_basic-facts}(a) leads to a classification of the vertices of critical hypergraphs. Let $G$ be a $(k+1)$-critical hypergraph. Then, a vertex is said to be a \textbf{low vertex} of $G$ if it has degree $k$ in $G$, and a \textbf{high vertex}, otherwise. Thus each high vertex of $G$ has degree at least $k+1$ in $G$.

We say that a connected hypergraph is a \textbf{Gallai tree} if each of its blocks is a complete graph, an odd cycle, or consists of just one hyperedge. A \textbf{Gallai forest} is a hypergraph whose components are all Gallai trees.
The next lemma is from Kostochka and Stiebitz \cite{KosStieb03}; it generalizes a famous result of Gallai \cite{Gal63a} on critical graphs.


\begin{lemma}\label{lemma_G(X)-Gallai}
Let $G$ be a $(k+1)$-critical hypergraph for some integer $k \geq 2$, let $L$ be the set of low vertices of $G$, and $H=V(G) \setminus L$. Moreover, let $$F=\{e \in E(G) ~|~ |e \cap L | \geq 2~ \text{and}~ |e \cap H|\geq 1\}.$$ If $L\not= \ems$, then the following statements hold:
\begin{itemize}
\item[\upshape (a)] $G(L)$ is a Gallai forest.
\item[\upshape (b)] If $e,e'\in F$ and $e \neq e'$, then $e \cap L \neq e' \cap L$.
\item[\upshape (c)] If $e \in F$, then $e \cap L$ belongs to $E(G(L))$ and is a bridge of $G(L)$.
\item[\upshape (d)] If $H$ is empty, then $G$ is a $K_{k+1}$, or $k=2$ and $G$ is an odd cycle, or $k=1$ and $G$ is a connected hypergraph consisting of one edge. Furthermore, if $G(L)$ contains a $K_{k+1}$, then $G=K_{k+1}$.
\end{itemize}
\end{lemma}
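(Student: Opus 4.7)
The key tool throughout is that, by criticality, every proper subhypergraph of $G$ admits a $k$-coloring (Proposition~\ref{prop_G-e}). Pre-coloring an appropriate subhypergraph of $G$ turns the remaining vertices of $L$ into a list-coloring problem inside $\{1,\ldots,k\}$: a low vertex $v$ of degree $k$ loses at most one admissible color per already-saturated incident edge, so its remaining list has size at least equal to the number of edges at $v$ still to be satisfied. Forbidding an extension to a $k$-coloring of $G$ then rigidly constrains the structure of $G(L)$.

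For (a), I fix a block $B$ of $G(L)$ with vertex set $X \subseteq L$ and $k$-color a subhypergraph of $G$ that avoids the edges lifting those of $B$. Each $v \in X$ then has $d_B(v)$ edges left to be satisfied (those projecting to $B$) and at most $k - d_B(v)$ already-saturated incident edges, each forbidding at most one color; thus $v$ receives a list $L(v)\subseteq\{1,\ldots,k\}$ with $|L(v)|\ge d_B(v)$. The hypergraph degree-choosability theorem of Kostochka and Stiebitz~\cite{KosStieb03} then shows that $B$ is $L$-colorable unless every block of $B$ is a complete graph, an odd cycle, or a single hyperedge; since $B$ itself is a single block, (a) follows.

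For (b), suppose toward a contradiction that distinct $e, e' \in F$ share the trace $T := e\cap L = e'\cap L$. Take $\varphi$ a $k$-coloring of $G - e'$ (Proposition~\ref{prop_G-e}). Then $e'$ must be monochromatic in some color $c$, so $T$ is monochromatic in $c$; since $e$ is properly colored and $T\subseteq e$, there is a high vertex $w\in e\setminus T$ with $\varphi(w)\neq c$. A localized recoloring around $w$, exploiting $d_G(w)\ge k+1$, then yields a proper $k$-coloring of $G$, a contradiction. For (c), if $e\in F$ and $e\cap L$ were not a bridge of $G(L)$, two vertices of $e\cap L$ would lie in the same component of $G(L) - (e\cap L)$; a Kempe-style color-swap along a hyperpath between them in $G(L) - (e\cap L)$ breaks the monochromatic color-class imposed on $e$ by any $k$-coloring of $G - e$, again contradicting criticality.

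For (d), if $H = \varnothing$ then $G$ is $k$-regular, so $\Delta(G) = k$, and Theorem~1.1 gives $\chi(G) \le k+1$ with equality only if $G$ is a complete graph, an odd cycle, or has a single edge; together with $k \ge 2$ and $k$-regularity, this yields the three stated options. For the ``furthermore'' assertion, if $K\subseteq L$ spans a copy of $K_{k+1}$ in $G(L)$, then by (b) the $\binom{k+1}{2}$ pairs in $K$ correspond to distinct edges $e_{uv}\in E(G)$ with $e_{uv}\cap L = \{u,v\}$; each $v\in K$ is incident to $k$ of them, exhausting $d_G(v) = k$, so no edge of $G$ meets $K$ outside this family. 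A short extension argument (colour $G - K$ with $k$ colors and observe that the induced constraints on $K$ reduce to properly colouring a $K_{k+1}$) then forces $V(G) = K$, giving $G = K_{k+1}$. The main obstacle I anticipate is the bookkeeping in (a): verifying that $|L(v)| \ge d_B(v)$ genuinely holds once one accounts for edges of $F$ that straddle $X$ and its complement (and possibly several different blocks of $G(L)$) is the delicate step, and it is precisely where the hypergraph version of the Erd\H{o}s--Rubin--Taylor degree-choosability theorem is indispensable.
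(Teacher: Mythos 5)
The first thing to note is that the paper does not prove this lemma at all: it is quoted from Kostochka and Stiebitz \cite{KosStieb03}, so there is no internal proof to compare against, and your attempt must be judged on its own. On those terms it has genuine gaps in (b) and (c). In (b), your concluding step fails: in general $w \notin e'$, so no recoloring ``around $w$'' can repair $e'$, which is the unique monochromatic edge under $\varphi$, and $d_G(w) \geq k+1$ makes $w$ \emph{harder} to recolor, not easier --- high degree means more constraints, never spare colors. The correct (and short) move is to recolor a \emph{low} vertex $u \in T = e \cap L = e' \cap L$: for each color $\alpha \neq c$, recoloring $u$ with $\alpha$ is blocked only by an edge $f_\alpha \ni u$ with $f_\alpha \setminus \{u\}$ monochromatic in $\alpha$, and $f_\alpha \notin \{e,e'\}$ because $T \setminus \{u\} \neq \ems$ is colored $c$; if every $\alpha$ is blocked, the $k-1$ distinct edges $f_\alpha$ together with $e$ and $e'$ give $d_G(u) \geq k+1 > k$, contradicting $u \in L$. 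In (c), ``a Kempe-style color-swap along a hyperpath'' is not a legitimate operation: in a hypergraph, exchanging two colors on the vertex set of a path does not preserve properness of the other edges meeting the path. One may only swap on a component of the subhypergraph formed by those edges all of whose vertices carry the two chosen colors, and nothing in your sketch ties the hyperpath in $G(L) - (e \cap L)$ to such a component, nor rules out that the component containing one vertex of $e$ contains \emph{all} of $e$, in which case $e$ remains monochromatic after the swap. This is precisely the hard part of the lemma, and it is missing.

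Part (a) rests on ``the hypergraph degree-choosability theorem of Kostochka and Stiebitz''; since the lemma you are proving is itself the low-vertex lemma of that same paper, this comes close to assuming what is to be proved --- acceptable as a citation, but then (a) is not really argued. (Your bookkeeping is otherwise sound: several edges of $G$ may share a trace in $B$, which only improves the bound $|L(v)| \geq d_B(v)$, and the constraints from edges of $F$ are weaker than properness, which is harmless in the direction you use.) Part (d) is correct: Theorem~1 handles $H = \ems$ cleanly, and the furthermore-argument is completable, though you do not need (b) there (one lifted edge per pair plus $d_G(v) = k$ already exhausts all degrees), and ``reduce to properly colouring a $K_{k+1}$'' should be made precise. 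The right statement is: if some $e_{uv}$ contains a high vertex, its constraint on $\{u,v\}$ forbids at most one common color, so coloring $u,v$ alike with an allowed color and the remaining $k-1$ vertices of $K$ bijectively with the remaining $k-1$ colors extends any $k$-coloring of $G-K$ to $G$, contradicting criticality; hence every $e_{uv}$ is an ordinary edge, $K$ spans a $K_{k+1}$ subgraph of $G$, and criticality forces $G = K_{k+1}$.
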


Gallai \cite{Gal63a} furthermore characterized the critical graphs having exactly one high vertex. A similar characterisation holds for hypergraphs; however, we only need the following easy consequence of the above lemma.

\begin{lemma}\label{lemma_|G_H|=1}
Let $G$ be a $(k+1)$-critical hypergraph for some integer $k \geq 2$. If $G$ has exactly one high vertex, then either $G$ has a separating vertex set of size $2$, or $k=2$ and $G$ is a hyperwheel, or $k=3$ and $G$ is an odd wheel.
\end{lemma}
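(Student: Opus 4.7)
The plan is to let $u$ be the unique high vertex and set $L = V(G) \setminus \{u\}$, then to study the shrunken hypergraph $G \div u = G(L)$. Since $G$ is $(k+1)$-critical, Proposition~\ref{prop_basic-facts}(c) says $G$ is a block, hence has no separating vertex, and so $G(L) = G \div u$ is connected. Moreover $L \neq \ems$ (else $V(G) = \{u\}$ and $\chi(G) = 1 < k+1$), so Lemma~\ref{lemma_G(X)-Gallai} applies, and $G(L)$ is a connected Gallai forest, i.e., a Gallai tree. Part (d) of that lemma also guarantees that $G(L)$ does not contain $K_{k+1}$ as a subhypergraph, since otherwise $G = K_{k+1}$ would have no high vertex.

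The central observation is that if $w$ is a separating vertex of $G(L)$, then $\{u,w\}$ is a separating vertex set of $G$, because from the definitions one checks $G \div \{u,w\} = G(L) \div w$. Thus, under the assumption that $G$ has no separating vertex set of size $2$, the Gallai tree $G(L)$ has no separating vertex and is therefore a single block: $G(L) = K_m$, or $G(L) = C_{2\ell+1}$, or $G(L)$ is a single hyperedge. Because $G$ is a simple hypergraph, each $v \in L$ is incident to at most one edge of the form $\{u,v\}$ (call such an edge a \emph{spoke} at $v$), and each of the remaining edges at $v$ produces a distinct edge of $G(L)$ through $v$. Denoting the number of spokes at $v$ by $s(v) \in \{0,1\}$, this gives the identity $s(v) = k - d_{G(L)}(v)$, which drives the case analysis.

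The three shapes of $G(L)$ are then treated in turn. If $G(L) = K_m$, the identity gives $s(v) = k - m + 1 \in \{0,1\}$, so $m \in \{k, k+1\}$; combined with $m \leq k$ this forces $m = k$ with every vertex of $L$ carrying a spoke, after which no edge $\{u,v,w\} \in E(G)$ can exist (it would contain the spoke $\{u,v\}$, violating simplicity), so every edge of $G(L)$ arises from an ordinary edge of $G$ and $G = K_{k+1}$, contradicting $|H| = 1$. If $G(L) = C_{2\ell+1}$ with $2\ell+1 \geq 5$, the identity gives $s(v) = k-2 \in \{0,1\}$, so $k \in \{2,3\}$. In the case $k = 3$, every $v \in L$ has a spoke and simplicity forces each cycle edge of $G(L)$ to come from an ordinary edge of $G$, so $G$ is the odd wheel with rim $C_{2\ell+1}$. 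In the case $k = 2$, there are no spokes, so the edges of $G$ containing $u$ are precisely triples $\{v_i,v_{i+1},u\} \in E(G)$ giving those cycle edges of $G(L)$ that are ``shrunken'' from $G$; since $u$ is high, at least one such triple exists, and after cyclically reindexing so that it is $\{v_{2\ell+1}, v_1, u\}$, one checks directly that the assignment $v_i = 1$ for odd $i$, $v_i = 2$ for even $i$, and $u = 2$ is a proper $2$-coloring of $G$, contradicting $\chi(G) = 3$. If $G(L)$ is a single hyperedge $e^* = L$, then $s(v) = k - 1 \in \{0,1\}$ forces $k = 2$ and $s(v) = 1$ for each $v$; the edge $e^*$ of $G(L)$ must arise from $e^* \in E(G)$ itself, since the alternative $e^* \cup \{u\} \in E(G)$ would contain every spoke $\{u,v\}$, and then $G$ is exactly the hyperwheel with rim $e^*$.

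The main obstacle is the subcase $k = 2$ with $G(L) = C_{2\ell+1}$: there the contradiction does not come from a structural clash but from exhibiting an explicit $2$-coloring, where the key trick is to exploit the reindexing freedom so that a hyperedge of $G$ containing $u$ absorbs the parity obstruction of the odd cycle. The remaining subcases fall out mechanically from the identity $s(v) = k - d_{G(L)}(v)$ combined with the simple-hypergraph constraint on spokes.
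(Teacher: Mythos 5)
Your proof is correct, and its skeleton is the one the paper uses: shrink at the unique high vertex $u$, observe via Lemma~\ref{lemma_G(X)-Gallai}(a) and Proposition~\ref{prop_basic-facts}(c) that $G(L)$ is a Gallai tree, convert a separating vertex $w$ of $G(L)$ into the size-$2$ separator $\{u,w\}$ of $G$, and otherwise analyze the single block of $G(L)$. Where you genuinely diverge is in how the single-block case is closed. The paper invokes Lemma~\ref{lemma_G(X)-Gallai}(c): any edge through the high vertex meeting $L$ in at least two vertices would have its trace be a \emph{bridge} of $G(L)$, which is impossible when the block is $2$-edge-connected; hence all edges at the hub are ordinary, $G(L)=G[L]$, and a short regularity count (the block is $(k-1)$-regular, the hub is adjacent to everything, $d_G(u)\geq k+1$) selects the hyperwheel and odd-wheel outcomes directly. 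You never use part (c); instead you run the pointwise spoke identity $s(v)=k-d_{G(L)}(v)$ (whose justification via simplicity, Proposition~\ref{prop_basic-facts}(d), is correct), and you import the addendum of Lemma~\ref{lemma_G(X)-Gallai}(d) to cap the clique size at $k$ where the paper simply notes the hub would then have degree $k$. The price of skipping (c) is that one configuration survives your counting which the paper's route never sees: $k=2$, rim an odd cycle, hub attached only through triples $\{u,v_i,v_{i+1}\}$ --- in the paper this dies instantly because such a triple's trace would be a bridge of the $2$-edge-connected cycle. Your explicit parity $2$-coloring (reindexed so the hyperedge $\{u,v_{2\ell+1},v_1\}$ absorbs the odd-cycle obstruction, with the monochromatic pair $\{v_{2\ell+1},v_1\}$ verifiably not an edge of $G$ by simplicity) correctly eliminates it, so in effect you reprove the relevant instance of (c) by hand. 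The trade-off: your argument is self-contained modulo parts (a) and (d) plus elementary counting, while the paper's is shorter and uniform across the three block types. One cosmetic remark: the trichotomy for the single block tacitly assumes $|L|\geq 2$ (else $G(L)=K_1$ is none of the three types), but this is vacuous since a $(k+1)$-critical hypergraph with $k\geq 2$ has minimum degree at least $2$ and so cannot have only two vertices.
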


\begin{proof}
Let $v$ be the only high vertex of $G$. Then, $L=V(G) \setminus \{v\}$ is the set of low vertices of $G$ and $G(L)=G\div v$. By Lemma~\ref{lemma_G(X)-Gallai}(a), $G(L)$ is a Gallai forest. As $G$ is a block (by Proposition~\ref{prop_basic-facts}(c)), $G(L)$ is connected and therefore a Gallai tree. Let $B$ be an end-block of $G(L)$. If $B$ is not the only block of $G(L)$, then $B$ contains a separating vertex $u$ of $G(L)$ and $\{v,u\}$ is a separating vertex set of $G$, so we are done. Otherwise, $G(L)=B$ and it follows from Lemma~\ref{lemma_G(X)-Gallai}(c) that $\partial_G(v)$ contains only ordinary edges and so $G(L)=G[L]$.
Since $G(L)$ is a Gallai tree consisting  only of the block $B$, this block $B$ is regular of degree $k-1$ and $v$ joined to each vertex of $B$ by an ordinary edge. As $d_G(v) \geq k+1$, $k=2$ and $B$ consists of just one edge, or $k=3$ and $B$ is an odd cycle. Thus, $k=2$ and $G$ is a hyperwheel, or $k=3$ and $G$ is an odd wheel, as claimed.
\end{proof}

As was previously noted, a critical graph is connected and contains no separating vertex. Dirac \cite{Dir52} as well as Gallai \cite{Gal63a} characterized critical graphs having a separating vertex set of size 2. The next theorem is the hypergraph counterpart. For a hypergraph $G$, by $\mathcal{CO}_k(G)$ we denote the set of all $k$-colorings of $G$, i.e., all colorings of $G$ with color set $\{1,2,\ldots,k\}$.

\begin{theorem}\label{theorem_gal+dir}
Let $G$ be a $(k+1)$-critical hypergraph for an integer $k \geq 2$, and let $S \subseteq V(G)$ be a separating vertex set of $G$ satisfying $|S| \leq 2$. Then $S$ is an independent set of $G$ consisting of two vertices, say $v$ and $w$, and $G \div S$ has exactly two components $H_1$ and $H_2$. Moreover, if $G_i=G[V(H_i) \cup S]$ for $i\in\{1,2\}$, we can adjust the notation so that for a coloring $\varphi_1 \in \mathcal{CO}_k(G_1)$ we have $\varphi_1(v)=\varphi_1(w)$. Then, the following statements hold:
\begin{itemize}
\item[\upshape (a)] Each coloring $\varphi \in \mathcal{CO}_k(G_1)$ satisfies $\varphi(v)=\varphi(w)$ and each coloring $\varphi \in \mathcal{CO}_k(G_2)$ satisfies $\varphi(v) \neq \varphi(w)$.
\item[\upshape (b)] The hypergraph $G_1'=G_1 + vw$ obtained from $G$ by adding the edge $vw$ is $(k+1)$-critical.
\item[\upshape (c)] The hypergraph $G_2'$ obtained from $G_2$ by identifying $v$ and $w$ is $(k+1)$-critical.
\end{itemize}
\end{theorem}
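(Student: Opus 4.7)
The plan is to establish the structural claims first ($|S|=2$, exactly two components of $G \div S$, and $S$ independent), then derive (a)–(c) from a single dichotomy about how $k$-colorings of the pieces interact across $S$. Since $G$ is a block by Proposition~\ref{prop_basic-facts}(c), $S$ cannot be a singleton, so $|S|=2$; write $S = \{v,w\}$. Let $H_1,\ldots,H_r$ be the components of $G \div S$ with $r \geq 2$. For any bipartition of these components into two non-empty families, define $G_A$ and $G_B$ as the induced subhypergraphs on each family together with $S$; both are proper subhypergraphs of $G$ and hence $k$-colorable by criticality.

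The key observation is the following: given $\varphi_A \in \mathcal{CO}_k(G_A)$ and $\varphi_B \in \mathcal{CO}_k(G_B)$, a color permutation aligning $\varphi_B$ with $\varphi_A$ on $\{v,w\}$ exists if and only if the two colorings have the same equality-type on $S$ (both $\varphi(v)=\varphi(w)$ or both $\varphi(v)\neq\varphi(w)$). If such a permutation exists, combining the two colorings yields a $k$-coloring of $G$, contradicting $\chi(G)=k+1$. Hence every coloring in $\mathcal{CO}_k(G_A)$ has the opposite equality-type from every coloring in $\mathcal{CO}_k(G_B)$; in particular, each of $G_A$, $G_B$ is uniformly of one type, which I will call \emph{equalizing} (type~A) or \emph{separating} (type~B).

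To rule out $r \geq 3$: if two pieces $G[V(H_i)\cup S]$ and $G[V(H_j)\cup S]$ share a common type, then their union admits a $k$-coloring of that same type; if they have opposite types, the union admits no $k$-coloring at all, which contradicts that the union is a proper subhypergraph of $G$. Applying the dichotomy to bipartitions of the form $\{H_i\} \mid \{H_k : k \neq i\}$ for varying $i$ therefore forces all individual pieces $G[V(H_i)\cup S]$ to carry the same type, so the bipartition $\{H_1\}\mid\{H_2,\ldots,H_r\}$ exhibits two sides of the same type, contradicting the dichotomy. Thus $r = 2$; labeling so that $G_1$ is equalizing and $G_2$ is separating establishes (a). If $vw$ were an edge of $G$, then since $G_1$ is induced and $\{v,w\}\subseteq V(G_1)$, we would have $vw\in E(G_1)$, forcing every $k$-coloring of $G_1$ to separate $v,w$ and contradicting type~A; hence $S$ is independent, and the edge sets of $G_1$ and $G_2$ are disjoint.

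For (b), adding $vw$ to $G_1$ destroys every $k$-coloring by type~A, so $\chi(G_1+vw)\geq k+1$; for any edge $e$ of $G_1+vw$, either $e=vw$ and $G_1$ is $k$-colorable as a proper subhypergraph of $G$, or $e\in E(G_1)\subseteq E(G)$ and a $k$-coloring of $G-e$ restricted to $G_2$ is separating, so its restriction to $G_1-e$ satisfies $\varphi(v)\neq\varphi(w)$ and remains valid after re-adding $vw$. For (c), $k$-colorings of $G_2'$ correspond bijectively to $k$-colorings of $G_2$ satisfying $\varphi(v)=\varphi(w)$, none of which exist by type~B; so $\chi(G_2')\geq k+1$, and for any edge $e'$ of $G_2'$ one lets $E'\subseteq E(G_2)$ be the set of edges of $G_2$ identified to $e'$, picks a $k$-coloring of the proper subhypergraph $G-E'$, and uses type~A on $G_1$ (whose edges lie in $G-E'$ since $E(G_1)\cap E(G_2)=\varnothing$) to guarantee $\varphi(v)=\varphi(w)$, yielding a $k$-coloring of $G_2'-e'$. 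The main obstacle is the dichotomy step and ruling out $r\geq 3$: one must verify that type is well-defined for unions of components and that common type propagates cleanly across the various bipartitions; once that is in hand, (a)–(c) are short edge-removal arguments against criticality of $G$.
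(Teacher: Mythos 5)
Your proposal is correct, and for parts (a)--(c) it follows essentially the same path as the paper: the same permute-and-combine argument yields the dichotomy that one side of the separation is ``equalizing'' and the other ``separating'' (which is exactly the paper's proof of (a) and of the independence of $S$), and your edge-removal arguments for (b) and (c) via Proposition~\ref{prop_G-e} match the paper's almost verbatim. Where you genuinely diverge is the claim that $G \div S$ has exactly two components. The paper proves this \emph{last}, as a consequence of (b) and (c): since $G_1'=G_1+vw$ is critical and $S$ is not independent in $G_1'$, the set $S$ cannot be a size-$2$ separating set of $G_1'$ (by the already-established first part of the theorem applied to $G_1'$), so $G_1'\div S$ is connected; likewise $G_2'\div v^*$ is connected because the critical hypergraph $G_2'$ is a block. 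You instead prove $r=2$ up front by a purely coloring-theoretic argument over bipartitions of the components: singleton bipartitions give each piece $G[V(H_i)\cup S]$ a uniform type, pairwise unions (proper subhypergraphs when $r\geq 3$, hence $k$-colorable, with both restrictions sharing the values at $v,w$) force all pieces to have the same type, and this contradicts the opposite-type dichotomy for the bipartition $\{H_1\}\mid\{H_2,\ldots,H_r\}$. This is a valid and arguably cleaner route: it is self-contained, needs neither (b) nor (c), and lets the structural statement precede the constructions, at the cost of being slightly longer; the paper's version is shorter once (b) and (c) are in hand but has a mildly self-referential flavor (invoking the theorem's first assertion for $G_1'$). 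Two small points: your argument tacitly uses that no edge of $G$ meets two components of $G\div S$ (so that $G=G_A\cup G_B$ and the combined colorings are proper) --- this is the one-line fact you flag at the end, and it does hold, since an edge $e$ meeting two components would have $|e\setminus S|\geq 2$ and $e\setminus S$ would be an edge of $G\div S$ joining them; and in (c) your deletion of the full preimage set $E'$ is actually \emph{more} careful than the paper, which deletes a single corresponding edge $e'$ --- both suffice, since colorings of $G-e'$ restricted to $G_2'$ only need to be proper on edges other than $e$, but your version preempts any worry about the identification merging several edges of $G_2$ into one edge of $G_2'$.
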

\begin{proof}
Since $G$ is $(k+1)$-critical with $k \geq 2$, the separating set $S$ consists of exactly two elements, say $S=\{v,w\}$. Then, $G$ is the union of two induced subhypergraphs $G_1$ and $G_2$ with $V(G_1) \cap V(G_2) = \{v,w\}$ and $|G_i| > 2$ for $i \in \{1,2\}$. Since $G_i$ is a proper subhypergraph of $G$, there is a coloring $\varphi_i \in \mathcal{CO}_k(G_i)$ ($i \in \{1,2\}$). Then, for one coloring, say $\varphi_1$, we have $\varphi_1(v)=\varphi_1(w)$ and for $\varphi_2$, we have $\varphi_2(v) \neq \varphi_2(w)$. For otherwise, we could permute the colors in one coloring such that $\varphi_1(v)=\varphi_2(v)$ and $\varphi_1(w)=\varphi_2(w)$ so that $\varphi_1 \cup \varphi_2$ would be a $k$-coloring of $G$, which is impossible. Consequently, $S$ is an independent set of $G$. Furthermore it follows that each coloring $\varphi \in \mathcal{CO}_k(G_1)$ satisfies $\varphi(v)=\varphi(w)$ and each coloring $\varphi \in \mathcal{CO}_k(G_2)$ satisfies $\varphi(v) \neq \varphi(w)$. Hence, (a) is proven.

For the proof of (b), let $G_1'=G_1 + vw$. Then, it follows from (a) that $\chi(G_1') \geq k+1$. Let $e$ be an arbitrary edge of $G_1'$. We show that $G_1' - e$ admits a $k$-coloring. If $e=vw$, this is evident. Otherwise, $e \in E(G_1)$ and there is a $k$-coloring $\varphi$ of $G-e$. By (a), it follows that $\varphi(v) \neq \varphi(w)$ and so $\varphi$ induces a $k$-coloring of $G_1'-e$. Hence, $G_1'$ is $(k+1)$-critical (see Proposition~\ref{prop_G-e}).

In order to prove (c), let $G_2'$ be the hypergraph obtained from $G_2$ by identifying $v$ and $w$ to a new vertex $v^*$. Then, by (a), $\chi(G_2') \geq k+1$. Let $e$ be an arbitrary edge of $G_2'$ and let $e'$ be a corresponding edge of $G_2$. Then, $G-e'$ admits a $k$-coloring $\varphi$ and, by (a), $\varphi(v)=\varphi(w)$ and so $\varphi$ induces a $k$-coloring of $G_2'-e$. Hence, $G_2'$ is $(k+1)$-critical.

Finally, we obtain that $$G \div S = (G_1 \div S) \cup (G_2 \div S) = (G_1' \div S) \cup (G_2' \div v^*).$$ Since $S$ is not an independent set of $G_1'$ and since $G_1'$ is critical, $G_1' \div S$ is connected. Moreover, since $G_2'$ is critical, $G_2' \div v^*$ is connected. This proves that $G \div S$ has exactly two components $H_1$ and $H_2$ as claimed and the proof is complete.
\end{proof}

\begin{theorem}\label{theorem_haj-sum}
Let $G=(G_1,v_1,e_1) \Delta (G_2,v_2,e_2)$ be a Haj\'os join of two hypergraphs $G_1$ and $G_2$, and let $k \geq 2$ be an integer. Then, the following statements hold:
\begin{itemize}
\item[\upshape (a)] If both $G_1$ and $G_2$ are $(k+1)$-critical, then $G$ is $(k+1)$-critical.
\item[\upshape (b)] If $G$ is $(k+1)$-critical and $k \geq 3$, then both $G_1$ and $G_2$ are $(k+1)$-critical.
\end{itemize}
\end{theorem}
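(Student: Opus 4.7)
The plan is to handle (a) and (b) separately, in each case reducing everything to controlling the coloring of the new edge $e^*$ of the Haj\'os join.

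For part (a), I would establish $\chi(G) \ge k+1$ and $\chi(G-e) \le k$ for every $e \in E(G)$. For the lower bound, assume $\varphi$ is a $k$-coloring of $G$ and restrict it to the $G_i$-side, identifying $v_i$ with $v^*$; the result is a $k$-coloring of $G_i - e_i$ in which $v_i$ takes color $c = \varphi(v^*)$. As $G_i$ is $(k+1)$-critical, it has no $k$-coloring, so this restriction must make $e_i$ monochromatic; then every vertex of $e_1 \cup e_2$ has color $c$, and in either type of Haj\'os join the edge $e^*$ is forced to be monochromatic under $\varphi$, a contradiction. For the upper bound on $\chi(G-e)$: if $e = e^*$, glue $k$-colorings of $G_1 - e_1$ and $G_2 - e_2$ at $v^*$ after permuting colors so that they agree at $v_1, v_2$. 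If $e \in E(G_i) \setminus \{e_i\}$ (say $i = 1$), take a $k$-coloring $\varphi_1$ of $G_1 - e$, which automatically colors $e_1$ properly (as it properly colors every edge of $G_1$ other than $e$), and a $k$-coloring $\varphi_2$ of $G_2 - e_2$, which must render $e_2$ monochromatic since $\chi(G_2) > k$. After permuting so that $\varphi_1(v_1) = \varphi_2(v_2) = c$ and gluing, some $u \in e_1 \setminus \{v_1\}$ satisfies $\varphi_1(u) \ne c$, while every vertex of $e_2 \setminus \{v_2\}$ (and $v^*$ itself, if it lies in $e^*$) has color $c$, so $e^*$ is properly colored.

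For part (b), I would prove in order: $\chi(G_i) \le k+1$, $\chi(G_i) \ge k+1$, and $\chi(G_i - f) \le k$ for every $f \in E(G_i)$. The first statement follows immediately from $\chi(G - e^*) \le k$ combined with recoloring $v_i$ by a fresh color $k+1$. The heart of the argument is showing $\chi(G_i) \ge k+1$. Assume toward contradiction that $\varphi_1$ is a $k$-coloring of $G_1$, and obtain a $k$-coloring $\psi_2$ of $G_2 - e_2$ by restricting a $k$-coloring of $G - e^*$. Setting $c_1 = \varphi_1(v_1)$ and $c_2 = \psi_2(v_2)$, I would pick a color permutation $\pi$ with $\pi(c_2) = c_1$ and glue $\varphi_1$ with $\pi \circ \psi_2$ at $v^*$; all inherited edges are automatically properly colored, so only $e^*$ must be verified. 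Since $\varphi_1$ colors $e_1$ properly, some $u \in e_1 \setminus \{v_1\}$ has color $d \ne c_1$; if $v^* \in e^*$ then $v^*$ has color $c_1 \ne d$, so $e^*$ is non-monochromatic for free. Otherwise $e^*$ is non-monochromatic unless $e_1 \setminus \{v_1\}$ and $e_2 \setminus \{v_2\}$ are each monochromatic (with colors $d$ and $c_0$ respectively), in which case I need $\pi(c_0) \ne d$. If $c_0 = c_2$ then $\pi(c_0) = c_1 \ne d$ automatically; otherwise $\pi(c_0)$ can be set to any element of $\{1,\ldots,k\} \setminus \{c_1\}$, and avoiding $d$ is possible precisely when $k - 2 \ge 1$. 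This is where the hypothesis $k \ge 3$ enters, and this delicate case is the main obstacle of the argument.

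Finally, for $\chi(G_i - f) \le k$ with $f \ne e_i$, say $f \in E(G_1)$, I would take a $k$-coloring $\varphi$ of $G - f$, which exists by criticality of $G$. If both $e_1$ and $e_2$ were monochromatic under $\varphi$ they would each carry the color $\varphi(v^*)$, forcing $e^*$ to be monochromatic, which is impossible. Hence at least one of them is properly colored; if only $e_2$ were, the restriction to $V(G_2)$ would be a $k$-coloring of $G_2$, contradicting the just-established $\chi(G_2) \ge k+1$. So $e_1$ is properly colored and $\varphi$ restricts to the desired $k$-coloring of $G_1 - f$. The case $f = e_i$ is already handled by restricting a $k$-coloring of $G - e^*$ to the $G_i$-side.
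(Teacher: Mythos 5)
Your proposal is correct and takes essentially the same route as the paper: part (a) via gluing colorings of $G_1-e_1$ and $G_2-e_2$ (resp.\ $G_1-e$ and $G_2-e_2$) after a color permutation, and part (b) via the same permutation-steering argument, with the hypothesis $k\geq 3$ entering at exactly the same point (forcing the monochromatic color of $e_2\setminus\{v_2\}$ away from the color $d$ while fixing $\pi(c_2)=c_1$, which needs a third color). Your write-up merely makes explicit the case analysis and permutation choices that the paper's proof of Theorem~\ref{theorem_haj-sum} leaves terse, and is sound throughout.
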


\begin{proof}
For the proof of (a), assume that both $G_1$ and $G_2$ are $(k+1)$-critical. If there is a $k$-coloring $\varphi$ of $G$, then there are vertices $x \neq y$ from $e^*$ such that $\varphi(x) \neq \varphi(y)$ and at least one vertex, say $x$, satisfies $\varphi(x) \neq \varphi (v^*)$. By symmetry, we may assume $x \in V(G_1)$. However, then the mapping $\varphi_1$ with $\varphi_1(u)=\varphi(u)$ for all $u \in V(G_1) \setminus \{v_1\}$ and $\varphi_1(v_1)=\varphi(v^*)$ is a $k$-coloring of $G_1$ and, thus, $\chi(G_1) \leq k$, a contradiction. In order to see that $G$ is $k$-critical, let $G'=G-e$ for some edge $e \in E(G)$. If $e=e^*$, then, as $G_1$ and $G_2$ are critical, we can create a $k$-coloring $\varphi$ of $G'$ by choosing $k$-colorings $\varphi_1$ of $G_1-e_1$ and $\varphi_2$ of $G_2-e_2$, permuting the colors such that $\varphi_1(v_1)=\varphi_2(v_2)$, and setting $\varphi(u)=\varphi_i(u)$ if $u \in V(G_i)$. If $e \neq e^*$, then $e \in E(G_i)$ for some $i \in \{1,2\}$, say $e \in E(G_1)$. Then, $G_1-e$ admits a $k$-coloring $\varphi_1$ and there is a vertex $u \in e_1$ with $\varphi_1(u) \neq \varphi_1(v_1)$. Moreover, $G_2-e_2$ admits a $k$-coloring $\varphi_2$ and all vertices from $e_2$ have the same color. Again by permuting the colors it is easy to see that one can create a $k$-coloring of $G$. Thus $G$ is $(k+1)$-critical, and (a) is proved.

In order to prove (b) assume that $G$ is $(k+1)$-critical with $k \geq 3$. By symmetry, it suffices to show that $G_1$ is $(k+1)$-critical, as well. Clearly, if $\chi(G_1) \leq k$, then there is a $k$-coloring $\varphi_1$ of $G_1$ with $\varphi_1(u) = \alpha \neq \beta=\varphi_1(v_1)$ for at least one $u \in e_1$. Moreover, as $G$ is $(k+1)$-critical and since $k \geq 3$, there is a $k$-coloring of $G-\tilde{e}$ and hence a $k$-coloring
$\varphi_2$ of $G_2-e_2$ such that $\varphi_2(v_2)=\beta$ and $\varphi_2(u') \neq \alpha$ for at least one $u' \in e_2 \setminus \{v_2\}$. Then, the union of the colorings $\varphi_1$ and $\varphi_2$ would be a $k$-coloring of $G$, a contradiction. Thus, $\chi(G_1) \geq k+1$. Similarly, one can show that $\chi(G_2) \geq k+1$. Now let $G_1'=G_1-e$ for some $e \in E(G_1)$. If $e=e_1$, then the restriction of any $k$-coloring $\varphi$ of $G-e^*$ to $V(G_1)$ is a $k$-coloring of $G_1'$ and we are done. If $e \neq e_1$, then there is a $k$-coloring $\varphi$ of $G-e$. If $\varphi(u) \neq \varphi(v^*)$ for at least one $u \in e^* \cap V(G_1)$, we are done. Otherwise, there is a vertex $u \in e \cap V(G_2)$ with $\varphi(u) \neq \varphi(v^*)$ and the restriction of $\varphi$ to $V(G_2)$ is a $k$-coloring of $G_2$, a contradiction to $\chi(G_2)\geq k+1$. This proves (b).
\end{proof}

Note that (b) does not hold for $k=2$, not even in the graph case as demonstrated for example by a cycle $C_7$ being obtained as Haj\'os join of two cycles $C_4$.

Let $G$ be a connected hypergraph, $v \in V(G)$, and $e \in E(G)$. Then, $\{v,e\}$ is a \textbf{separating set} (consisting of one edge and one vertex) if $v$ is a separating vertex of $G-e$ (no matter whether $v \in e$ or not).

\begin{theorem}\label{theorem_sep-set}
Let $G$ be a $(k+1)$-critical hypergraph with $k \geq 3$. If $G$ has a separating set consisting of one edge and one vertex, then $G$ is a Haj\'os join of two hypergraphs.
\end{theorem}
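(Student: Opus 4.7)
The plan is to exhibit $G$ explicitly as a Haj\'os join by unpacking the separating set. Let $e$ and $v$ form the separating set, so $v$ is a separating vertex of $G - e$; then $G - e = H_1 \cup H_2$ for induced subhypergraphs $H_1, H_2$ with $V(H_1) \cap V(H_2) = \{v\}$ and $|H_i| \geq 2$ for $i \in \{1,2\}$. Since $G$ itself is a block by Proposition~\ref{prop_basic-facts}(c), the edge $e$ must contain vertices from both $V(H_1) \setminus \{v\}$ and $V(H_2) \setminus \{v\}$, for otherwise $v$ would also separate $G$. Hence $X_i := (e \cap V(H_i)) \setminus \{v\}$ is non-empty for $i \in \{1,2\}$.

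Setting $e_i := X_i \cup \{v\}$ (so $|e_i| \geq 2$), I form two vertex-disjoint hypergraphs $G_1, G_2$ by taking copies of $H_1, H_2$ in which $v$ is renamed to $v_i \in V(G_i)$ and by adjoining $e_i$ as a new edge of $G_i$. Forming the Haj\'os join $(G_1, v_1, e_1) \Delta (G_2, v_2, e_2)$ with the variant $e^* = (e_1 \cup e_2) \setminus \{v_1, v_2\}$ when $v \notin e$ and $e^* = (e_1 \cup e_2 \cup \{v^*\}) \setminus \{v_1, v_2\}$ when $v \in e$, the new edge $e^*$ becomes $e$ under the identification $v_1 = v_2 = v^* = v$, and the remaining edges are $E(H_1) \cup E(H_2) = E(G - e)$. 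Hence the Haj\'os join equals $G$.

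The delicate point, which is where the hypothesis $k \geq 3$ enters, is verifying that $e_i$ is not already an edge of $H_i$, so that adjoining $e_i$ genuinely produces a new edge. When $v \in e$ one has $e_i \subsetneq e$, and by simplicity of $G$ (Proposition~\ref{prop_basic-facts}(d)) no edge of $G$ properly contains another, so $e_i \notin E(H_i)$. The case $v \notin e$ is the main obstacle. Assume toward contradiction that $e_1 \in E(H_1)$: combining any $k$-coloring $\psi_1$ of $H_1$ with any $k$-coloring $\psi_2$ of $H_2$ that agrees on $v$ yields a $k$-coloring of $G - e$, which cannot extend to $G$ and therefore must be monochromatic on $e$. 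This forces $\psi_1(X_1)$ and $\psi_2(X_2)$ to be monochromatic of one and the same color $c(\alpha)$, depending only on $\alpha := \psi_1(v) = \psi_2(v)$, while validity of $e_1$ in $\psi_1$ gives $c(\alpha) \neq \alpha$. Since applying any permutation $\pi \in S_k$ to the color set sends $k$-colorings to $k$-colorings, $c$ must satisfy $c \circ \pi = \pi \circ c$ for every $\pi \in S_k$; but for $k \geq 3$, the only self-map of $\{1, \ldots, k\}$ that commutes with every element of $S_k$ is the identity, contradicting $c(\alpha) \neq \alpha$.
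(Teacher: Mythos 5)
Your proof is correct, and its skeleton coincides with the paper's: the same decomposition $G-e=H_1\cup H_2$ meeting in $v$, the same candidate edges $e_i=(e\cap V(H_i))\cup\{v\}$, the same reduction to showing that $e_i$ is not already an edge, and the same reassembly as a Haj\'os join (the paper defines the $e_i$ identically). Where you genuinely diverge is in verifying $e_i\notin E(G)$. The paper runs a single explicit color exchange covering both cases at once: a $k$-coloring $\varphi$ of $G-e$ is monochromatic on $e$ in some color $\alpha$; if $e_1\in E(G)$ then $\varphi(v)=\beta\neq\alpha$ (when $v\in e$ this already contradicts monochromaticity of $e$), and since $k\geq 3$ one can swap $\alpha$ with a third color $\gamma$ on the $H_2$-side without touching $v$, splitting the colors on $e$ and producing a $k$-coloring of $G$, a contradiction. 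You instead split cases: your simplicity argument for $v\in e$ (via Proposition~\ref{prop_basic-facts}(d)) is a clean shortcut the paper does not use, and it incidentally needs no bound on $k$; for $v\notin e$ your forced-color function $c$ is well defined (your mixing of pairs of colorings of $H_1$ and $H_2$ agreeing at $v$ does establish this), the equivariance $c\circ\pi=\pi\circ c$ is correct, and it is true that for $k\geq 3$ the only self-map of $\{1,\ldots,k\}$ commuting with all of $S_k$ is the identity, contradicting $c(\alpha)\neq\alpha$. Note, though, that your centralizer step, instantiated with the transposition $(\beta\,\gamma)$ fixing $\alpha$, is the paper's swap in disguise: both proofs consume the hypothesis $k\geq 3$ at the identical juncture, namely the existence of a third color. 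What your abstraction buys is a transparent explanation of why the argument resists $k=2$ (the transposition $(1\,2)$ commutes with $S_2$ and is fixed-point-free); what the paper's concrete swap buys is brevity, needing only one coloring rather than quantification over all pairs, and a uniform treatment of $v\in e$ and $v\notin e$.
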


\begin{proof}
There is a vertex $v^* \in V(G)$ and an edge $e^* \in E(G)$ such that $G-e^*=G_1 \cup G_2$ with $V(G_1) \cap V(G_2) = \{v^*\}$ and $|G_i| \geq 2$ for $i \in \{1,2\}$. As $G$ is a block (by Proposition~\ref{prop_basic-facts}(c)), $e^* \cap V(G_i) \neq \varnothing$ for $i \in \{1,2\}$. For $i \in \{1,2\}$, let $e_i = (e^* \cap V(G_i)) \cup \{v^*\}$. If we can show that $e_i \not \in E(G)$, then $G$ is the Haj\'os join of $G_1 + e_1$ and $G_2 + e_2$, and we are done. By symmetry, assume that $e_1 \in E(G)$. As $G$ is $(k+1)$-critical, there is a $k$-coloring $\varphi$ of $G-e^*$ and all vertices from $e^*$ have the same color $\alpha$. Moreover, as $e_1 \in G$, $v^*$ has a color $\beta \neq \alpha$. Since $k \geq 3$, there is a color $\gamma \not \in \{\alpha, \beta\}$. By coloring all vertices from $G_2$ having color $\alpha$ with $\gamma$ and vice versa, we obtain a $k$-coloring of $G$, a contradiction. This completes the proof.
\end{proof}

The next theorem examines decompositions of $(k+1)$-critical hypergraphs having a separating edge set of size $k$.
Let $G$ be an arbitrary hypergraph. An \textbf{edge cut} of $G$ is a triple $(X,Y,F)$ such that $X$ is a non-empty proper subset of $V(G)$, $Y=V(G) \setminus X$, and $F=\partial_G(X)=\partial_G(Y)$. If $(X,Y,F)$ is an edge cut of $G$, by $X_F$ (respectively $Y_F$) we denote the set of vertices of $X$ that are incident to some edge of $F$. An edge cut $(X,Y,F)$ of $G$ is non-trivial if $|X_F| \geq 2$ and $|Y_F| \geq 2$.

That a $(k+1)$-critical graph is $k$-edge-connected was proved by Dirac~\cite{Dir53}. A characterization of $(k+1)$-critical graphs having a separating edge set of size $k$ was given by Toft \cite{Toft70} and, independently, by Gallai (oral communication to the third author). Gallai used the following lemma about complements of bipartite graphs. The \textbf{clique number} $\om(G)$ of a graph $G$ is the maximum integer $n$ such that $K_n$ is a subgraph of $G$. A graph $G$ is \textbf{perfect} if each induced subgraph $H$ of $G$ satisfies $\chi(H)=\om(H)$. It is well known that complements of bipartite graphs are perfect. For the reader's convenience we repeat the proof of the following lemma from \cite{StiToft18}.

\begin{lemma}
Let $H$ be a graph and let $k\geq 3$ be an integer. Suppose that $(A,B,F')$ is an edge cut of $H$ such that $|F'|\leq k$ and $A$ as well as $B$ are cliques of $H$ with $|A|=|B|=k$. If $\chi(H)\geq k+1$, then $|F'|=k$ and $F'=\partial_H(v)$ for some vertex $v$ of $H$.
\label{Le:perfect}
\end{lemma}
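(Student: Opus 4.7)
The proof hinges on recognizing $H$ as a perfect graph. Since $A$ and $B$ are cliques of $H$ that partition $V(H)$, the complement $\overline{H}$ has no edges inside $A$ and no edges inside $B$; all of its edges run between $A$ and $B$, namely $E(\overline{H}) = \{ab : a \in A,\, b \in B,\, ab \notin F'\}$. Thus $\overline{H}$ is bipartite, so by the quoted fact $H$ is perfect. The hypothesis $\chi(H) \geq k+1$ therefore gives $\omega(H) \geq k+1$, so we may fix a clique $K$ in $H$ with $|K| \geq k+1$.

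Next I would decompose this clique along the cut. Set $K_A = K \cap A$ and $K_B = K \cap B$. Because $|A| = |B| = k < k+1$, neither $K_A$ nor $K_B$ alone can contain $K$, so $|K_A|, |K_B| \geq 1$. The clique condition forces every pair $(a,b) \in K_A \times K_B$ to span an edge of $H$, and the only $H$-edges between $A$ and $B$ are those in $F'$. This yields the key inequality
\[
|K_A| \cdot |K_B| \;\leq\; |F'| \;\leq\; k.
\]

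The proof is then closed by a short extremal argument. Write $x = |K_A|$, $y = |K_B|$, so $x, y \geq 1$, $x + y \geq k+1$, and $xy \leq k$. Fixing the sum $s = x+y \geq k+1$, the product $xy$ with $x, y \geq 1$ is minimized at the extreme split $\{x,y\} = \{1, s-1\}$, giving $xy \geq s - 1 \geq k$. Combined with $xy \leq k$, this forces $xy = k$ and $\{x, y\} = \{1, k\}$; in particular $|F'| = k$. Up to interchanging $A$ and $B$, suppose $|K_A| = 1$ and $|K_B| = k$, so $K_B = B$ and $K_A = \{v\}$ for some vertex $v \in A$. Then the $k$ edges from $v$ to $B$ all belong to $F'$, and since $|F'| = k$ they exhaust it. Hence every edge of $F'$ is incident to $v$, which is the conclusion $F' = \partial_H(v)$ in the bipartite cut.

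The main conceptual step is spotting the bipartite structure of $\overline{H}$ so that perfection delivers a clique of size $k+1$; after that the proof reduces to an elementary optimization of $xy$ under a sum constraint, and I expect no genuine obstacle.
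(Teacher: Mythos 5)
Your proof is correct and follows essentially the same route as the paper's: perfection of $H$ (as the complement of a bipartite graph) gives a clique of size at least $k+1$, all of whose crossing pairs lie in $F'$, and the constraints $|K_A|\cdot|K_B|\le k$ with $|K_A|+|K_B|\ge k+1$ force the split $\{1,k\}$, hence $|F'|=k$ with all cut edges at one vertex. The paper closes the extremal step via strict concavity of $g(s)=s(k+1-s)$ on $[1,k]$ with $g(1)=g(k)=k$, while you use the equivalent inequality $(x-1)(y-1)\ge 0$; this is a cosmetic difference only.
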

\begin{proof}
The graph $H$ is perfect and so $\om(H)=\chi(H)\geq k+1$. Consequently, $H$ contains a clique $X$ with $|X|=k+1$. Let $s=|A\cap X|$ and hence $k+1-s=|B\cap X|$. Since $|A|=|B|=k$, this implies that $s\geq 1$ and $k+1-s\geq 1$. Since $X$ is a clique of $H$, the set $E'$ of edges of $H$ joining a vertex of $A\cap X$ with a vertex of $B\cap X$ satisfies $E'\subseteq F'$ and $|E'|=s(k+1-s)$. Clearly, the function $g(s)=s(k+1-s)$ is strictly concave on the real interval $[1,k]$ as $g''(s)=-2$. Since $g(1)=g(k)=k$, we conclude that $g(s)>k$ for all $s\in (1,k)$. Since $g(s)=|E'|\leq |F'|\leq k$, this implies that $s=1$ or $s=k$. In both cases we obtain that $E'=F'=\partial_H(v)$ for some vertex $v$ of $H$ and $|E'|=|F'|=k$.
\end{proof}

\begin{theorem} \label{theorem_edge-cut}
 Let $G$ be a $(k+1)$-critical hypergraph with $k \geq 2$, and let $F \subseteq E(G)$ be a separating edge set of $G$ with $|F| \leq k$. Then, $|F|=k$ and there is an edge cut $(X,Y,F)$ of $G$ satisfying the following properties:
\begin{itemize}
\item[\upshape (a)] Every $k$-coloring $\varphi$ of $G[X]$ satisfies $|\varphi(X_F)|=1$ and every $k$-coloring $\varphi$ of $G[Y]$ satisfies $|\varphi(Y_F)|=k$ and for every color $i \in \{1,2,\ldots,k\}$ there is an edge $e \in F$ such that $\varphi(e \cap Y)=\{i\}$.
\item[\upshape (b)] Each vertex of $Y_F$ is incident to exactly one edge of $F$.

\item[\upshape (c)] If $|X_F| \geq 2$, then the hypergraph $G_1$ obtained from $G[X]$ by adding the hyperedge with vertex set $X_F$ is $(k+1)$-critical.
\item[\upshape (d)] The hypergraph $G_2$ obtained from $G[Y]$ by adding a new vertex $v$ and adding for each edge $e \in F$ the new edge $(e-X) \cup \{v\}$ is $(k+1)$-critical.
\end{itemize}
\end{theorem}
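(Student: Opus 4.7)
The plan is to reduce the structure of the edge cut to Lemma~\ref{Le:perfect}. First I would replace $F$ by a minimal separating edge subset of itself, so $F=\partial_G(X)$ for some non-empty proper $X\subseteq V(G)$; set $Y=V(G)\setminus X$, so that $(X,Y,F)$ is an edge cut with $|F|\leq k$. Since $G$ is $(k+1)$-critical, both $G[X]$ and $G[Y]$ are $k$-colorable, and I would fix $\varphi_X\in\mathcal{CO}_k(G[X])$ and $\varphi_Y\in\mathcal{CO}_k(G[Y])$, writing $A_e=\varphi_X(e\cap X)$ and $B_e=\varphi_Y(e\cap Y)$ for $e\in F$.

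The key step is to build an auxiliary graph $\tilde{H}$ on $A\cup B$, with $A=\{a_1,\ldots,a_k\}$ and $B=\{b_1,\ldots,b_k\}$ each a clique and with a cross edge $a_ib_j$ for every $e\in F$ satisfying $A_e=\{i\}$ and $B_e=\{j\}$; edges $e\in F$ with $|A_e|\geq 2$ or $|B_e|\geq 2$ contribute nothing. Letting $F''$ be the set of cross edges, $(A,B,F'')$ is an edge cut of $\tilde{H}$ with $|F''|\leq|F|\leq k$. I would then show $\chi(\tilde{H})\geq k+1$: a $k$-coloring of $\tilde{H}$ can be normalized (by permuting colors) to $a_i\mapsto i$ and $b_j\mapsto\pi(j)$ for a permutation $\pi\in S_k$ with $\pi(j)\neq i$ along every cross edge; then the combined map $\varphi_X$ on $X$ and $\pi\circ\varphi_Y$ on $Y$ would give a $k$-coloring of $G$ (edges in $F$ with $|A_e|\geq 2$ or $|B_e|\geq 2$ are automatically non-monochromatic, the remaining edges by choice of $\pi$), contradicting $\chi(G)=k+1$. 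Lemma~\ref{Le:perfect} applied to $\tilde{H}$ and $(A,B,F'')$ then yields $|F''|=k$ with all cross edges incident to a single vertex $v$. Consequently $|A_e|=|B_e|=1$ for every $e\in F$ (and distinct $e$'s give distinct cross edges), forcing $|F|=k$; after relabeling $X$ and $Y$ so that $v\in A$, the coloring $\varphi_X$ is constant on $X_F$, and $e\mapsto B_e$ is a bijection from $F$ onto the $k$ color-singletons.

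To promote (a) to every $k$-coloring, I would rerun the construction with $\varphi_X$ replaced by an arbitrary $\varphi_X'\in\mathcal{CO}_k(G[X])$: the new singular vertex cannot lie in $B$, as that would force $\varphi_Y$ to be constant on $Y_F$ and contradict the bijection already obtained, so $|\varphi_X'(X_F)|=1$. A symmetric rerun handles arbitrary $\varphi_Y'$. Statement (b) then follows immediately: if $y\in Y_F$ lay in two distinct $e,e'\in F$, then in any $k$-coloring of $G[Y]$ the color of $y$ would coincide with the singletons $B_e$ and $B_{e'}$, violating the bijection.

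For (c) and (d) I would check the critical-characterization of Proposition~\ref{prop_G-e}. In (c), $\chi(G_1)\geq k+1$ with $G_1=G[X]+\{X_F\}$ because a $k$-coloring of $G_1$ would restrict to a $k$-coloring of $G[X]$ non-constant on $X_F$, contradicting (a); the only subtle criticality check is for $e\in E(G[X])$, where any $k$-coloring $\varphi$ of $G-e$ must leave $X_F$ non-monochromatic, since otherwise the unique $e^*\in F$ whose singleton $B_{e^*}$ matches the constant color of $X_F$ would be monochromatic in $\varphi$. For (d), given a $k$-coloring $\varphi$ of $G_2$ with $\varphi(v)=c$, I would combine it with some $\varphi_X\in\mathcal{CO}_k(G[X])$ that assigns $X_F$ the color $c$ (available by (a) after permuting colors) to produce a $k$-coloring of $G$, hence $\chi(G_2)\geq k+1$; criticality of $G_2$ is obtained by restricting $k$-colorings of appropriate $G-e$ to $Y$ and setting $v$ equal to the constant $X_F$-color. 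The main obstacle is the $\chi(\tilde{H})\geq k+1$ step, which converts the coloring-extension problem on $G$ into exactly the clique-cut configuration that Lemma~\ref{Le:perfect} is tailored to; everything after that is bookkeeping.
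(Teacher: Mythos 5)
Your proof is correct and follows essentially the same route as the paper: reduce to a minimal separating edge set, build the auxiliary graph on two $k$-cliques with cross edges recording monochromatic edge-sides, show $\chi\geq k+1$ via the permutation argument, apply Lemma~\ref{Le:perfect}, and then derive (a)--(d) by the same bookkeeping. If anything, you make explicit two points the paper leaves implicit, namely the counting that forces every $e\in F$ to have singleton sides with distinct cross edges, and the argument that in the reruns with an arbitrary coloring the singular vertex must lie in $A$ rather than $B$.
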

\begin{proof}
We may assume that $F$ is a minimal separating edge set of $G$ and, hence, there exists an edge cut $(X,Y,F)$ of $G$. Since $G$ is $(k+1)$-critical, for every set $Z\in \{X,Y\}$ there is a coloring $\varphi_Z\in \mathcal{CO}_k(G[Z])$. Now we construct an auxiliary graph $H$ as follows. The vertex set of $H$ consists of two disjoint cliques $A$ and $B$ with $|A|=|B|=k$, say $A=\{a_1,a_2, \ldots, a_k\}$ and $B=\{b_1,b_2, \ldots, b_k\}$. The edge set of $H$ consists of the edges of the cliques $A$ and $B$ and an additional edge set $F'\subseteq \partial_H(A)=\partial_H(B)$. An edge $a_ib_j$ belongs to $F'$ if and only if there is an edge $e \in F$ such that $\varphi_X(e \cap X) = \{i\}$ and $\varphi_Y(e \cap Y)=\{j\}$. We claim that $\chi(H)\geq k+1$. For otherwise, there exists a coloring $\varphi'\in \mathcal{CO}_k(H)$ and we may assume that $\varphi'(a_i)=i$ and $\varphi'(b_j)=\pi(j)$ for a permutation $\pi\in S_k$. Then  $\varphi_Y'=\pi \circ \varphi_Y$ belongs to $\mathcal{CO}_k(G[Y])$ and the function $\varphi_X \cup \varphi_Y'$ belongs to $\mathcal{CO}_k(G)$, which is impossible. This proves the claim that $\chi(H)\geq k+1$. From Lemma~\ref{Le:perfect} it then follows that $|F'|=k$ and $F'=\partial_H(v)$ for some vertex $v\in V(H)=A\cup B$. By symmetry, we may assume that $v\in A$. On the one hand, this implies that $X_F$ is an independent set of $G$ and $|\varphi_X(X_F)|=1$. On the other hand, it implies that $|\varphi_Y(Y_F)|=k$ and for every color $i \in \{1,2,\ldots,k\}$ there is an edge $e \in F$ such that $\varphi_Y(e \cap Y)=\{i\}$. This shows, in particular, that $|F|=k$. If $\varphi\in \mathcal{CO}_k(G[X])$ we can apply the same argument to the colorings $\varphi$ and $\varphi_Y$, which leads to $|\varphi(X_F)|=1$. If $\varphi \in \mathcal{CO}_k(G[Y])$, we can apply the same argument to the colorings $\varphi_X$ and $\varphi$, which leads to $|\varphi(Y_F)|=k$. This proves (a) and (b).

For the proof of (c) assume that $|X_F| \geq 2$ and let $G_1$ be the hypergraph obtained from $G[X]$ by adding the hyperedge with vertex set $X_F$. By (a), $\chi(G_1) \geq k+1$. Let $e$ be an arbitrary edge from $G_1$. We show that $G-e$ has a $k$-coloring. If $e=X_F$, this is evident. Otherwise, $e$ belongs to $G[X]$ and since $G$ is $(k+1)$-critical, there is a $k$-coloring $\varphi$ of $G-e$. Clearly, $\varphi$ induces a $k$-coloring of $G[Y]$ and we conclude from (a) that $|\varphi(X_F)|\geq 2$. Hence, $\varphi$ induces a $k$-coloring of $G_1-e$. Consequently, $G_1$ is $(k+1)$-critical (see Proposition~\ref{prop_G-e}).

In order to prove statement (d) let $G_2$ be the hypergraph obtained from $G[Y]$ by adding a new vertex $v$ and adding for each edge $e \in F$ the new edge $(e-X) \cup \{v\}$. By (a), $\chi(G_2) \geq k+1$. Let $e$ be an arbitrary edge of $G_2$. We show that $G_2 - e$ admits a $k$-coloring. Let $e'$ be the corresponding edge of $e$ in $G$. Then, $e' \in F \cup E(G[Y])$. As $G$ is $(k+1)$-critical, there is a $k$-coloring $\varphi$ of $G-e'$ and, by (a), $|\varphi(X_F)|=1$. Hence, $\varphi$ induces a $k$-coloring of $G_2-e$ and we are done.
\end{proof}

\section{Proof of Theorem~\ref{theorem_main-result}}
Let $G$ be hypergraph with $\lambda(G)\geq 3$. Then, $G$ contains a critical hypergraph $H$ with $\chi(G)=\chi(H)$. Furthermore, $\chi(H) \leq \lambda(H) + 1$ (by Proposition~\ref{prop_basic-facts}(b), respectively by Theorem~\ref{theorem_edge-cut} and Theorem~\ref{theorem_menger}). As $\lambda$ is a monotone hypergraph parameter, i.e., $\lambda(H) \leq \lambda(G)$ for any subhypergraph $H \subseteq G$, it follows $\chi(G) \leq \lambda(G) + 1$ and the first part of the main result is proven.

It remains to be shown that $\chi(G) = \lambda(G) + 1$ if and only if some block of $G$ belongs to $\mathcal{H}_{\lambda(G)}$. We will show that the critical subhypergraph $H$ is a block of $G$ which belongs to $\mathcal{H}_{\lambda(G)}$. For an integer $k \geq 2$, let $\mathcal{C}_k$ denote the class of hypergraphs $H$ such that $H$ is a critical hypergraph with chromatic  number $k+1$ and with $\lambda(H) \leq k$. We first prove that $\mathcal{C}_k=\mathcal{H}_k$.

\begin{theorem} \label{theorem_ck=hk}
Let $k\geq 3$ be an integer. Then, the two classes $\mathcal{C}_k$ and $\mathcal{H}_k$ coincide.
\end{theorem}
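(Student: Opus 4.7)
The plan is to prove both inclusions. For the forward direction $\mathcal{H}_k \subseteq \mathcal{C}_k$, I would induct on the construction of $G \in \mathcal{H}_k$. The base elements (the clique $K_{k+1}$, and additionally the odd wheels for $k = 3$) are easily verified to be $(k+1)$-critical with $\lambda = k$. For the inductive step $G = G_1 \Delta G_2$ with $G_1, G_2 \in \mathcal{C}_k$, Theorem~\ref{theorem_haj-sum}(a) supplies the $(k+1)$-criticality of $G$, so Proposition~\ref{prop_basic-facts}(b) already gives $\lambda(G) \geq k$. The reverse bound $\lambda(G) \leq k$ would be proved via Menger's theorem by producing, for every pair of vertices, an edge cut of size at most $k$: on opposite sides of the join one uses $(\partial_{G_1}(v_1) \setminus \{e_1\}) \cup \{e^*\}$ of size $d_{G_1}(v_1)$ (or its mirror), which is at most $k$ provided $v_1$ or $v_2$ can be chosen to be a low vertex; for vertices on the same side one lifts the $\lambda_{G_i}$-witness. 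That low vertices are available follows inductively: each Hajós join from critical pieces containing low vertices preserves low vertices in a suitable position.

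The backward inclusion $\mathcal{C}_k \subseteq \mathcal{H}_k$ is the delicate direction, which I would handle by induction on $|V(G)|$. The base $|V(G)| = k + 1$ forces $G = K_{k+1} \in \mathcal{H}_k$. In the inductive step $G \in \mathcal{C}_k$ with $|V(G)| > k+1$, Proposition~\ref{prop_basic-facts}(b) and the hypothesis give $\lambda(G) = k$, so $G$ admits an edge cut of size $k$. The goal is to extract a Hajós decomposition. First I would check whether $G$ possesses a separating set $\{v^*, e^*\}$ consisting of one vertex and one edge; if it does, Theorem~\ref{theorem_sep-set} decomposes $G = G_1 \Delta G_2$, Theorem~\ref{theorem_haj-sum}(b) yields $(k+1)$-criticality of each piece, and I would then verify $\lambda(G_i) \leq k$ by observing that any large family of edge-disjoint paths in $G_i$ would, by routing through the bottleneck $\{v^*, e^*\}$ (and through the ``replacement'' subgraph on the other side), lift to an equally large family in $G$, contradicting $\lambda(G) = k$; hence $G_i \in \mathcal{C}_k$ and induction applies.

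If no $\{v^*, e^*\}$ separator exists, I would fall back on Theorem~\ref{theorem_edge-cut} applied to an edge cut $(X, Y, F)$ of size $k$. If the cut is non-trivial ($|X_F|, |Y_F| \geq 2$), parts (c) and (d) produce $(k+1)$-critical pieces $G_1$ (with a new hyperedge $X_F$) and $G_2$ (with a new vertex $v$ and $k$ star edges through it). I would then reconstruct $G$ as an iterated sequence of $k - 1$ Hajós joins, each step identifying a further vertex of $X_F$ in the current $G_1$-side with the residue of $v$ in the $G_2$-side and grafting in one edge of $F$; Theorem~\ref{theorem_haj-sum}(a) keeps every intermediate hypergraph $(k+1)$-critical, and a short monotonicity argument preserves $\lambda \leq k$ throughout. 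If instead only trivial cuts of size $k$ exist, say $X = \{x\}$ with $x$ low, then the absence of both an $\{v^*, e^*\}$-separator and any separating vertex set of size $2$, combined with the low-vertex structural results Lemma~\ref{lemma_G(X)-Gallai} and Lemma~\ref{lemma_|G_H|=1}, pins $G$ down: one checks that $G$ has no high vertex (else a $\{v, e\}$-separator or size-$2$ separator would arise from Gallai-tree structure), and then Lemma~\ref{lemma_G(X)-Gallai}(d) forces $G = K_{k+1}$, or $k = 3$ and $G$ is an odd wheel, in either case a base element of $\mathcal{H}_k$. A separating vertex set of size $2$ is handled by Theorem~\ref{theorem_gal+dir} and then converted into an $\{v^*, e^*\}$-separator by adjoining the edge $vw$ from part~(b) and picking any edge of the $G_1'$-side containing $v$ or $w$.

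The main obstacle is the iterated Hajós-join reconstruction in the non-trivial edge-cut case: one must choose compatible identifications at each of the $k-1$ steps so that the edges generated cumulatively match $F$ exactly (in particular, the right vertex of $X_F$ must meet the right edge of $F$ at each stage), and one must certify that every intermediate hypergraph lies in $\mathcal{C}_k$, so that $\lambda$ does not inflate past $k$ in mid-construction. A secondary but non-trivial subtlety is the $\lambda$-monotonicity in Case A, since the Hajós decomposition replaces the bottleneck pair $\{v^*, e^*\}$ by structurally different edges on the two sides, and the path-lifting/projection argument needs to be carried out with care.
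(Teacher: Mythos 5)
There are two genuine gaps, one in each inclusion. In the forward direction $\mathcal{H}_k \subseteq \mathcal{C}_k$, your candidate cut $(\partial_{G_1}(v_1)\setminus\{e_1\})\cup\{e^*\}$ has size $d_{G_1}(v_1)$, and since $G_1$ is $(k+1)$-critical this is $\geq k$ with equality only when $v_1$ is a low vertex. But $\mathcal{H}_k$ is closed under \emph{all} Haj\'os joins, and the join vertex can be forced to be high: the identified vertex $v^*$ of a join has degree $d_{G_1}(v_1)+d_{G_2}(v_2)-1 \geq 2k-1$, and nothing prevents a subsequent join in the iterated construction from being performed at exactly this vertex. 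So your fallback claim that ``low vertices are available in a suitable position'' cannot be arranged, and the star cut argument fails precisely there. The paper avoids cuts entirely: given $p=\lambda_G(u,u')$ edge-disjoint hyperpaths, at most one uses $e^*$, and rerouting that single path through the restored edge $e_1$ projects the whole family into $G_1$, giving $p\leq \lambda_{G_1}(u,u')\leq k$ (or $p\leq\lambda_{G_1}(u,v_1)\leq k$ in the cross-side case). A cut-based repair is possible, but one must lift a \emph{minimum} $(u,v_1)$-cut of $G_1$ into $G$, not the star at $v_1$.

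The backward direction has a more serious defect, which you yourself flag as ``the main obstacle'': the iterated Haj\'os-join reconstruction in the non-trivial edge-cut case is impossible even in principle. A Haj\'os join operates on two \emph{disjoint} hypergraphs and creates exactly one identified vertex and one crossing edge; after the first join of the pieces $G_1$ and $G_2$ they are no longer disjoint, so there is no second join ``identifying a further vertex of $X_F$'' --- while $G$ has $k\geq 3$ edges across the cut and, in general, no vertex shared by the two sides, a configuration that no sequence of Haj\'os joins of those two pieces can produce. The paper never reconstructs $G$ from the edge-cut pieces. Instead, Claim~\ref{claim_sep-size>2} runs a minimum-order counterexample argument: if $G\in\mathcal{C}_k$ has no separating vertex set of size at most $2$ and is not a base element, then $G$ has at least two high vertices (via Lemma~\ref{lemma_G(X)-Gallai}(d) and Lemma~\ref{lemma_|G_H|=1}); separating two high vertices by a $k$-edge cut, noting $|X_F|\geq 3$, and shrinking the $Y$-side to the single hyperedge $X_F$ (Theorem~\ref{theorem_edge-cut}(c), with path-rerouting through the critical hypergraph of part (d) to certify $\lambda\leq k$) yields a strictly smaller member of $\mathcal{C}_k$ with no small separator --- a contradiction. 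Haj\'os joins enter only through the size-$2$-separator case (Claim~\ref{claim_sep-size2}), and there your reduction is also broken: the edge $vw$ of Theorem~\ref{theorem_gal+dir}(b) is not an edge of $G$, so it cannot occur in a separating set of $G$ consisting of one vertex and one edge. The paper's actual argument is a connectivity count: criticality of $G_1+vw$ gives $\lambda_{G_1}(v,w)\geq k-1$, and $\lambda_G(v,w)\leq k$ then forces $\lambda_{G_2}(v,w)\leq 1$, so $G_2$ has a bridge $e$ and $\{v,e\}$ or $\{w,e\}$ is a vertex-edge separator of $G$, after which Theorem~\ref{theorem_sep-set} supplies the Haj\'os decomposition.
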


\begin{proof}
The proof of Theorem~\ref{theorem_ck=hk} is divided into five claims. Proving the following claim is straightforward and therefore left to the reader.
\begin{claim} \label{claim_odd-wheel}
The odd wheels belong to the class $\mathcal{C}_3$ and the complete graphs of order $k+1$ belong to the class $\mathcal{C}_k$.
\end{claim}

\begin{claim} \label{claim_ck-hajos}
Let $k \geq 3$ be an integer, and let $G=G_1 \Delta G_2$ be the Haj\'os join of two hypergraphs $G_1$ and $G_2$. Then, $G$ belongs to the class $\mathcal{C}_k$ if and only if both $G_1$ and $G_2$ belong to the class $\mathcal{C}_k$.
\end{claim}
\begin{proof2}
We may assume that $G=(G_1,v_1,e_1) \Delta (G_2,v_2,e_2)$. First suppose that $G_1$ and $G_2$ are from $\mathcal{C}_k$. Then, by Theorem~\ref{theorem_haj-sum}, $G$ is $(k+1)$-critical. It remains to be shown that $\lambda(G) \leq k$. To this end, let $u$ and $u'$ be distinct vertices of $G$ and let $p = \lambda_G(u,u')$. Then, there is a system $\mathcal{P}$ of $p$ edge disjoint $(u,u')$-hyperpaths in $G$. If $u$ and $u'$ are both from $G_1$, then only one hyperpath $P$ of $\mathcal{P}$ may contain vertices from $G_2$ (distinct from $v^*$). In this case, $P$ contains the vertex $v^*$ as well as the edge $e^*$. Let $u^* \in V(G_1)$ be the vertex from $P$ such that $u^*$ and $e^*$ are consecutive in $P$. Then, replacing the subhyperpath $u^*Pv^*$ of $P$ by the hyperpath $P'=(u^*,e_1,v_1)$ leads to a system of $p$ edge disjoint $(u,u')$-paths in $G_1$, and, thus, $p \leq \lambda_{G_1}(u,u') \leq k$. The same argument can be used if $u,u' \in V(G_2)$. It remains to consider the case that one vertex, say $u$, belongs to $G_1$ and the other vertex $u'$ belongs to $G_2$. By symmetry we may assume that $u \neq v^*$. Again at most one hyperpath $P$ of $\mathcal{P}$ uses the edge $e^*$ and all other hyperpaths of $\mathcal{P}$ contain the vertex $v^*(=v_1=v_2)$. As before, let $u^*$ be the vertex from $V(G_1)$ such that $u^*$ and $e^*$ are consecutive in $P$ and let $P'=(u^*,e_1,v_1)$. If we replace $P$ by the hyperpath $uPu^* + P'$, then we obtain $p$ edge disjoint $(u,v_1)$-hyperpaths in $G_1$, and thus, $p \leq \lambda_{G_1}(u,v_1) \leq k$. Hence, $\lambda(G) \leq k$ and so $G \in \mathcal{C}_k$.

Now suppose that $G \in \mathcal{C}_k$. As $k\geq 3$, it follows from Theorem~\ref{theorem_haj-sum}(b) that both $G_1$ and $G_2$ are $(k+1)$-critical graphs. It remains to be shown that $\lambda(G_i) \leq k$ for $i\in \{1,2\}$. By symmetry, it is sufficient to prove that $\lambda(G_1) \leq k$. Let $u$ and $u'$ be distinct vertices of $G_1$ and let $p=\lambda_{G_1}(u,u')$. Then, there is a system $\mathcal{P}$ of $p$ edge disjoint $(u,u')$-hyperpaths in $G_1$. At most one hyperpath $P$ of $\mathcal{P}$ may contain the edge $e_1$. If $v_1$ and $e_1$ are not consecutive in $P$, replacing $e_1$ by $e^*$ leads to a system of $p$ edge-disjoint $(u,u')$-hyperpaths of $G$ and so $p \leq \lambda_G(u,u') \leq k$ and we are done. So assume that $v_1$ and $e_1$ are consecutive in $P$. Let $u''$ be a vertex from $e_2 \setminus \{v_2\}$. As $G_2$ is critical, Proposition~\ref{prop_basic-facts}(b) implies that there is a $(u'',v_2)$-hyperpath $P'$, which does not contain the edge $e_2$. So, replacing the edge $e_1$ in $P$ by the sequence $e^*P'$, we get $p$ edge-disjoint $(u,u')$-hyperpaths of $G$, and hence, $p \leq \lambda_G(u,u') \leq k$. Thus, $\lambda(G_1) \leq k$ and the claim is proven.
\end{proof2}

The next claim is a direct consequence of claims \ref{claim_odd-wheel} and \ref{claim_ck-hajos}.

\begin{claim}
Let $k \geq 3$ be an integer. Then, $\mathcal{H}_k$ is a subclass of $\mathcal{C}_k$.
\end{claim}

\begin{claim}\label{claim_sep-size>2}
Let $k \geq 3$ be an integer, and let $G$ be a hypergraph from $\mathcal{C}_k$. If $G$ does not admit a separating vertex set of size at most $2$, then either $k=3$ and $G$ is an odd wheel, or $k \geq 4$ and $G$ is a complete graph of order $k+1$.
\end{claim}
\begin{proof2}
The proof is by contradiction; we consider a counter-example $G$ with minimum order $|G|$. Then, $G \in \mathcal{C}_k$ having no separating set of size at most $2$ and either $k=3$ and $G$ is not an odd wheel, or $k \geq 4$ and $G$ is not a complete graph of order $k+1$. First we show that the set $H$ of high vertices of $G$ contains at least two vertices. If $H=\varnothing$, then, as $G$ is a block and as $k \geq 3$, it follows from Lemma~\ref{lemma_G(X)-Gallai}(d) that $G$ is a complete graph of order $k+1$, a contradiction. If $|H|=1$, then Lemma~\ref{lemma_|G_H|=1} implies that $k=3$ and $G$ is an odd wheel, a contradiction. Thus, $|H| \geq 2$. Let $u$ and $v$ be distinct high vertices of $G$. As $G \in \mathcal{C}_k$, it follows from Proposition~\ref{prop_basic-facts}(b) that $\lambda(G)=k$ and, therefore, $G$ contains a separating edge set $F$ with $|F|=k$, which separates $u$ and $v$. From Theorem~\ref{theorem_edge-cut} it follows that there is an edge cut $(X,Y,F)$ satisfying the four properties of that theorem. Since $F$ separates $u$ and $v$, we may assume that $u \in X$ and $v \in Y$. As $u$ is a high vertex and $G$ has no separating vertex set of size at most two, it follows that $|X_F| \geq 3$. Now we consider the hypergraph $G_1$ obtained from $G[X]$ by adding the hyperedge $e$ with vertex set $X_F$. By Theorem~\ref{theorem_edge-cut}(c), $G_1$ is $(k+1)$-critical. As $G$ has no separating vertex set of size at most $2$ and since $|X_F| \geq 3$, $G_1$ has not neither.

Now we claim that $\lambda(G_1) \leq k$. To this end, let $x$ and $y$ be distinct vertices of $G_1$ and let $\mathcal{P}$ be a set of $p=\lambda_{G_1}(x,y)$ edge disjoint $(x,y)$-hyperpaths of $G_1$. Then, at most one hyperpath $P$ contains the edge $e$. The hyperpath $P$ contains a subhyperpath $P'=(z,e,z')$. Then, there is a $(z,z')$-hyperpath $P^*$ containing only edges of $F$ and $G[Y]$. This follows from Theorem~\ref{theorem_edge-cut}(d). By replacing the hyperpath $P'$ by $P^*$ we obtain a system of $p$ edge-disjoint $(x,y)$-hyperpaths in $G$ and so $p \leq \lambda_G(x,y) \leq k$. Hence, $\lambda(G_1) \leq k$ and so $G_1 \in \mathcal{C}_k$. Clearly, $|G_1| < |G|$ and either $k=3$ and $G_1$ is not an odd wheel, or $k \geq 4$ and $G_1$ is not a complete graph of order $k+1$. This gives a contradiction to the choice of $G$. Thus, the claim is proven.
\end{proof2}

\begin{claim}\label{claim_sep-size2}
Let $k \geq 3$ be an integer, and let $G$ be a hypergraph from $\mathcal{C}_k$. If $G$ has a separating vertex set of size $2$, then $G=G_1 \Delta G_2$ is the Haj\'os join of two hypergraphs $G_1$ and $G_2$, which both belong to $\mathcal{C}_k$.
\end{claim}
\begin{proof2}
If $G$ has a separating set consisting of one edge and one vertex, then Theorem~\ref{theorem_sep-set} implies that $G$ is the Haj\'os join of two hypergraphs $G_1$ and $G_2$. By Claim~\ref{claim_ck-hajos} it then follows that both $G_1$ and $G_2$ belong to $\mathcal{C}_k$ and we are done. It remains to consider the case that $G$ does not contain a separating set consisting of one edge and one vertex. By assumption, there is a separating vertex set of size $2$, say $S=\{v,w\}$. Then, Theorem~\ref{theorem_gal+dir} implies that $G \div S$ has exactly two components $H_1$ and $H_2$ such that the hypergraphs $G_i=G[V(H_i) \cup S]$ with $i \in \{1,2\}$ satisfy the three properties of this theorem. In particular, we get that $G_1'=G_1+vw$ is a $(k+1)$-critical hypergraph. By Proposition~\ref{prop_basic-facts}(b) it then follows that $\lambda_G(v,w) \geq k$ implying that $\lambda_{G_1}(v,w) \geq k-1$. As $G \in \mathcal{C}_k$, $\lambda_G(v,w) \leq k$, which implies that $\lambda_{G_2}(v,w) \leq 1$. Since $G_2$ is connected, this implies that $G_2$ has a separating edge $e$. But then, $\{v,e\}$ or $\{w,e\}$ is a separating set consisting of one edge and one vertex, a contradiction.
\end{proof2}

As a consequence of Claim~\ref{claim_sep-size>2} and Claim~\ref{claim_sep-size2}, we conclude that the class $\mathcal{C}_k$ is contained in the class $\mathcal{H}_k$ and so $\mathcal{C}_k=\mathcal{H}_k$, as claimed.
\end{proof}

\begin{proofof}[Theorem~\ref{theorem_main-result}]

In order to complete the proof of Theorem~\ref{theorem_main-result}, let $G$ be a hypergraph with $\lambda(G)=k$ and $k\geq 3$. As shown at the beginning of the section, we have $\chi(G) \leq k+1$. If one block $H$ of $G$ belongs to $\mathcal{H}_k$, then $H \in \mathcal{C}_k$ (by Theorem~\ref{theorem_ck=hk}) and hence $\chi(G) = k+1$ (by \eqref{eq_block-chi}).

Assume conversely that $\chi(G) = k+1$. Then, $G$ contains a critical subhypergraph $H$ such that $\chi(H) = k+1$. Since $\lambda(H) \leq \lambda(G) \leq k$, $H \in \mathcal{C}_k$. By Proposition~\ref{prop_basic-facts}(c), $H$ contains no separating vertex. We claim that $H$ is a block of $G$. Otherwise, $H$ would be a proper subhypergraph of a block $G'$ of $G$. This implies that there are distinct vertices $v$ and $w$ in $H$ which are joined by a hyperpath $P$ of $G$ satisfying $E(P) \cap E(H) = \varnothing$. Since $\lambda_H(v,w) \geq k$ (by Proposition~\ref{prop_basic-facts}(c)), this implies that $\lambda_G(v,w) \geq k+1$ and thus $\lambda(G) \geq k+1$, a contradiction. This proves the claim that $H$ is a block of $G$. As $\mathcal{C}_k=\mathcal{H}_k$ by Theorem~\ref{theorem_ck=hk}, it follows that $H \in \mathcal{H}_k$. This completes the proof of the theorem.

\end{proofof}

\section{Splitting Operation}

First we want to characterize the $(k+1)$-critical hypergraphs having a separating edge set of size $k$. These hypergraphs can be decomposed into smaller critical hypergraphs. We now want to introduce a reverse operation, called \textbf{splitting}.

Let $G_1$ and $G_2$ be two disjoint hypergraphs, let $\tilde{e} \in E(G_1)$ and $\tilde{v} \in V(G_2)$. Furthermore, let $s: \partial_{G_2}(\tilde{v}) \to 2^{\tilde{e}}$ be a mapping such that $s(e) \neq \varnothing$ for all $e \in \partial_{G_2}(\tilde{v})$ and $$\bigcup_{e \in \partial_{G_2}(\tilde{v})}s(e) = \tilde{e}.$$
Now let $G$ be the hypergraph with vertex set $V(G) = V(G_1) \cup (V(G_2) \setminus \{\tilde{v}\})$ and edge set $$E(G)=(E(G_1) \setminus \{\tilde{e}\}) \cup (E(G_2) \setminus \partial_{G_2}(v)) \cup \{(e- \{\tilde{v}\}) \cup s(e) ~|~ e \in \partial_{G_2}(\tilde{v})\}.$$
We then say that $G$ is obtained from $G_1$ and $G_2$ by \textbf{splitting} the vertex $\tilde{v}$ into the edge $\tilde{e}$, and we briefly write $G=S(G_1,\tilde{e},G_2,\tilde{v},s)$. If $|s(e)|=1$ for all $e \in \partial_{G_2}(\tilde{v})$, we call the splitting $s$ a \textbf{simple splitting}.

\begin{theorem}\label{theorem_splitting_low}
Let $G_1$ and $G_2$ be two disjoint $(k+1)$-critical hypergraphs with $k \geq 2$, let $\tilde{e} \in E(G_1)$, and let $\tilde{v} \in V(G_2)$ be a low vertex of $G_2$. Then the hypergraph $G=S(G_1,\tilde{e_1},G_2,\tilde{v},s)$ is $(k+1)$-critical, too, and $F=\partial_G(V(G_1))$ is a separating edge set of size $k$.
\end{theorem}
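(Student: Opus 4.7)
The plan is to verify both claims. The separation claim is direct: since $\tilde{v}$ is low in $G_2$, $|\partial_{G_2}(\tilde{v})|=k$, so $F$ consists of the $k$ pairwise distinct new edges $(e\setminus\{\tilde{v}\})\cup s(e)$ for $e\in \partial_{G_2}(\tilde{v})$, each of which straddles $V(G_1)$ and $V(G_2)\setminus\{\tilde{v}\}$; hence $|F|=k$ and $F$ separates the two sides. To show $\chi(G)\geq k+1$ I argue by contradiction. Assume a $k$-coloring $\varphi$ of $G$ exists; restricting to $V(G_1)$ yields a $k$-coloring of $G_1-\tilde{e}$, so by $\chi(G_1)=k+1$ the edge $\tilde{e}$ is monochromatic in some color $\alpha$. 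Extending $\varphi|_{V(G_2)\setminus\{\tilde{v}\}}$ by $\tilde{v}\mapsto\alpha$ then properly colors every $e\in \partial_{G_2}(\tilde{v})$: the new edge $(e\setminus\{\tilde{v}\})\cup s(e)$ is properly colored in $G$ while $s(e)\subseteq\tilde{e}$ is uniformly colored $\alpha$, forcing $e\setminus\{\tilde{v}\}$ to contain a vertex of color distinct from $\alpha$. This yields a $k$-coloring of $G_2$, contradicting $\chi(G_2)=k+1$.

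For criticality I must exhibit, for each $e^*\in E(G)$, a $k$-coloring of $G-e^*$. If $e^*$ lies in $E(G_2)\setminus \partial_{G_2}(\tilde{v})$, or if $e^*=(\hat{e}\setminus\{\tilde{v}\})\cup s(\hat{e})$ for some $\hat{e}\in \partial_{G_2}(\tilde{v})$, then I take a $k$-coloring $\varphi_1$ of $G_1-\tilde{e}$ (necessarily with $\tilde{e}$ monochromatic in some color $\alpha$) together with a $k$-coloring $\psi$ of $G_2-e'$ (where $e'=e^*$ or $\hat{e}$ respectively), renamed so that $\psi(\tilde{v})=\alpha$. For each $e\in \partial_{G_2}(\tilde{v})\setminus\{e'\}$ the edge $e$ is properly colored by $\psi$ and contains $\tilde{v}$ of color $\alpha$, so $e\setminus\{\tilde{v}\}$ contains a vertex of color $\neq\alpha$; combined with the uniformly $\alpha$-colored $s(e)$ the corresponding new edge in $G$ is properly colored. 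Gluing $\varphi_1$ with $\psi|_{V(G_2)\setminus\{\tilde{v}\}}$ therefore yields a $k$-coloring of $G-e^*$.

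The main obstacle is the remaining case $e^*\in E(G_1)\setminus\{\tilde{e}\}$: now any $k$-coloring $\varphi_1$ of $G_1-e^*$ properly colors $\tilde{e}$, which hence carries at least two colors, so the easy argument above breaks. Combining $\varphi_1$ with a $k$-coloring $\psi$ of the proper subhypergraph $G_2-\tilde{v}$, the new edge associated with $e\in \partial_{G_2}(\tilde{v})$ is monochromatic iff, for the unique color $\beta$ with $e\setminus\{\tilde{v}\}$ monochromatic in $\beta$ under $\psi$, the set $s(e)$ is also monochromatic in $\beta$ under $\varphi_1$. Set $A_\beta=\{e\in\partial_{G_2}(\tilde{v}) : s(e) \text{ is monochromatic in }\beta\text{ under }\varphi_1\}$; these sets are pairwise disjoint (each $s(e)$ is monochromatic in at most one color) and each satisfies $|A_\beta|\leq k-1$, since otherwise $\tilde{e}=\bigcup_e s(e)$ would be monochromatic. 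Hall's marriage condition therefore holds---disjointness gives $\bigcap_{\beta\in S}A_\beta=\varnothing$ for $|S|\geq 2$, and $|A_\beta|\leq k-1$ handles $|S|=1$---so there exists a bijection $\pi:\{1,\ldots,k\}\to\partial_{G_2}(\tilde{v})$ with $\pi(\beta)\notin A_\beta$ for every $\beta$. Starting from any $k$-coloring $\psi_0$ of $G_2-\tilde{v}$ and permuting its colors by an appropriate $\sigma\in S_k$ realizes every such bijection, so I pick $\psi$ realizing $\pi$; then in the combined map $\varphi_1\cup\psi|_{V(G_2)\setminus\{\tilde{v}\}}$ each new edge of $G$ carries both color $\beta$ (on the $\psi$-side) and some color $\neq \beta$ (on the $\varphi_1$-side), yielding the desired $k$-coloring of $G-e^*$.
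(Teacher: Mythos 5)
Your proof is correct, and it follows the same overall architecture as the paper's proof---first $\chi(G)\geq k+1$, then a $k$-coloring of $G-e^*$ in the same three cases, then the trivial count $|F|=d_{G_2}(\tilde{v})=k$---but it diverges at the two places where real work happens, and in both you are more self-contained. For the lower bound, the paper invokes the fact (quoted from Theorem~\ref{theorem_edge-cut}) that under any $k$-coloring of $G_2-\tilde{v}$ every color appears monochromatically on some $e\setminus\{\tilde{v}\}$ with $e\in\partial_{G_2}(\tilde{v})$; you instead extend the restriction of a hypothetical $k$-coloring of $G$ by $\tilde{v}\mapsto\alpha$ and contradict $\chi(G_2)=k+1$ directly---a variant that, notably, never uses that $\tilde{v}$ is low (consistent with the fact that the lower bound holds for general splittings). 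For the crucial case $e^*\in E(G_1)\setminus\{\tilde{e}\}$, the paper settles the existence of the right color permutation with a terse ``(see Lemma~\ref{Le:perfect})'', i.e., via the perfect-graph lemma on complements of bipartite graphs; you instead prove it by Hall's theorem with forbidden sets $A_\beta$, which are pairwise disjoint (as $s(e)\neq\varnothing$) and of size at most $k-1$ (since $\tilde{e}=\bigcup_e s(e)$ is not monochromatic under $\varphi_1$), and your verification of Hall's condition is correct. What Hall buys is a fully elementary, self-contained argument; what the paper's route buys is uniformity, since Lemma~\ref{Le:perfect} is the same engine behind Theorem~\ref{theorem_edge-cut}. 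One point you should make explicit rather than tacit: your phrase ``the unique color $\beta$ with $e\setminus\{\tilde{v}\}$ monochromatic in $\beta$ under $\psi$'', and the claim that permuting the colors of $\psi_0$ realizes every bijection $\pi$, both rest on the standard low-vertex fact that for any $\psi_0\in\mathcal{CO}_k(G_2-\tilde{v})$ each of the $k$ sets $e\setminus\{\tilde{v}\}$, $e\in\partial_{G_2}(\tilde{v})$, is monochromatic, in pairwise distinct colors: since $\psi_0$ does not extend to $\tilde{v}$, every color is realized monochromatically on some such set, and $d_{G_2}(\tilde{v})=k$ then forces a bijection between the edges at $\tilde{v}$ and the colors. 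This is exactly the fact the paper states at the start of its proof, and it takes one line to justify, so it is an omission of exposition rather than a gap.
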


\begin{proof}
Since $\tilde{v}$ is a low vertex of $G_2$, for each coloring $\varphi \in \mathcal{CO}_k(G_2-\tilde{v})$ and for each color $i \in \{1,2,\ldots,k\}$ there is an edge $e \in \partial_{G_2}(\tilde{v})$ with $\varphi(e \setminus \{\tilde{v}\})=\{i\}$ (by Theorem~\ref{theorem_edge-cut}). Furthermore, in each coloring $\varphi$ of $G_1 - \tilde{e}$, the edge $\tilde{e}$ is monochromatic with respect to $\varphi$. Consequently, $\chi(G) \geq k+1$. It remains to show that $\chi(G-e) \leq k$ for all edges $e \in E(G)$. If $e \in E(G_1)$, then $G_1 - e$ admits a $k$-coloring $\varphi_1$ in which the edge $\tilde{e}$ is not monochromatic. Hence, we can choose any $k$-coloring $\varphi_2$ of $G_2 - \tilde{v}$ and permute the colors such that $\varphi_1 \cup \varphi_2$ is a $k$-coloring of $G-e$ (see Lemma~\ref{Le:perfect}). If $e \not \in E(G_1)$, we choose the corresponding edge $e' \in E(G_2)$. Then, there is a coloring $\varphi_2 \in \mathcal{CO}_k(G_2-e')$. Combining $\varphi_2$ with a coloring $\varphi_1 \in \mathcal{CO}_k(G_1 - \tilde{e})$ results in a $k$-coloring of $G-e$. Thus, $G$ is $(k+1)$-critical (see Proposition~\ref{prop_G-e}). By construction, $F$ is a separating edge set with $|F|=d_{G_2}(\tilde{v})=k$. This completes the proof.
\end{proof}

Combining Theorem~\ref{theorem_gal+dir} with the next results provides a characterization of $(k+1)$-critical hypergraphs having a separating vertex set of size $2$.

\begin{theorem}\label{theorem_splitting_ordinary}
Let $G_1$ and $G_2$ be two disjoint $(k+1)$-critical hypergraphs with $k \geq 2$, let $\tilde{e} \in E(G_1)$ be an ordinary edge of $G_1$, and let $\tilde{v} \in V(G_2)$ be an arbitrary vertex. Let $G=S(G_1,\tilde{e},G_2,\tilde{v},s)$ and let $G_2'=G[(V(G_2)\sm \{\tilde{v}\}) \cup \tilde{e}]$. If $\cn(G_2')\leq k$, then $G$ is a $(k+1)$-critical hypergraph and $\tilde{e}$ is a separating vertex set of $G$ of size $2$.
\end{theorem}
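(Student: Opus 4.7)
The plan is to verify three things in turn: that $\tilde e=\{a,b\}$ separates $V(G)$ in the required way, that $\chi(G)\geq k+1$, and that $\chi(G-e)\leq k$ for every $e\in E(G)$. The separation claim is immediate from the construction, since every edge of $G$ is contained either in $V(G_1)$ (those inherited from $E(G_1)\sm\{\tilde e\}$) or in $V(G_2')=(V(G_2)\sm\{\tilde v\})\cup\{a,b\}$ (those from $E(G_2)\sm\partial_{G_2}(\tilde v)$ and the split edges $(f\sm\{\tilde v\})\cup s(f)$). Because $|G_1|,|G_2|\geq k+2\geq 4$, both sides of the cut contain at least one vertex outside $\{a,b\}$, so $\{a,b\}$ is a separating vertex set of size $2$.

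The key observation, and the only place where the hypothesis $\chi(G_2')\leq k$ is really used, is that every $k$-coloring $\psi$ of $G_2'$ must satisfy $\psi(a)\neq\psi(b)$. Otherwise, setting $\psi(\tilde v):=\psi(a)=\psi(b)$ would extend $\psi$ to a $k$-coloring of $G_2$, contradicting $\chi(G_2)=k+1$. To see that the extension is proper, take any $f\in\partial_{G_2}(\tilde v)$: properness of the split edge $(f\sm\{\tilde v\})\cup s(f)$ under $\psi$ yields two vertices $x,y$ of distinct $\psi$-color; if both lie in $f\sm\{\tilde v\}$ then $f$ is already properly colored in $G_2$, and otherwise exactly one of them lies in $s(f)\subseteq\{a,b\}$, forcing the other into $f\sm\{\tilde v\}$ with a color different from $\psi(\tilde v)$, so that $\tilde v$ and this vertex witness properness in $f$. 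Once this is in hand, $\chi(G)\geq k+1$ follows quickly: any hypothetical $k$-coloring $\varphi$ of $G$ restricts to a $k$-coloring of $G_1-\tilde e$, which by criticality of $G_1$ forces $\varphi(a)=\varphi(b)$, while $\varphi$ restricted to $V(G_2')$ is a $k$-coloring of $G_2'$ and so forces $\varphi(a)\neq\varphi(b)$.

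For criticality, fix $e\in E(G)$. In the case $e\in E(G_1)\sm\{\tilde e\}$, take a $k$-coloring $\varphi_1$ of $G_1-e$ (which has $\varphi_1(a)\neq\varphi_1(b)$ because $\tilde e$ is still an edge of $G_1-e$) together with any $k$-coloring $\psi$ of $G_2'$ (which has $\psi(a)\neq\psi(b)$ by the key observation). Permuting the colors of $\psi$ so that it agrees with $\varphi_1$ on $\{a,b\}$ and gluing yields a $k$-coloring of $G-e$. In the case $e\in E(G_2')$, let $e'\in E(G_2)$ be the edge producing $e$ under the splitting (so either $e=e'$ or $e=(e'\sm\{\tilde v\})\cup s(e')$), take a $k$-coloring $\varphi_2$ of $G_2-e'$, and extend it to $V(G_2')$ by setting $\varphi_2(a)=\varphi_2(b):=\varphi_2(\tilde v)$. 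The same case analysis as in the key observation shows that this extension is a $k$-coloring of $G_2'-e$. Gluing it to any $k$-coloring $\varphi_1$ of $G_1-\tilde e$ (which necessarily satisfies $\varphi_1(a)=\varphi_1(b)$ by criticality of $G_1$) after a single color swap produces the required $k$-coloring of $G-e$.

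The main difficulty is the key observation in the second paragraph; once it is available, everything else is the standard critical-gluing-along-a-separator argument already used in Theorem~\ref{theorem_splitting_low}. The precise structure of the splitting operation, and in particular the requirement $s(f)\neq\varnothing$ for each $f\in\partial_{G_2}(\tilde v)$, is exactly what makes the reverse-splitting case analysis go through.
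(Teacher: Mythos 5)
Your proof is correct and follows essentially the same route as the paper's: the key observation that every $k$-coloring of $G_2'$ must separate the two vertices of $\tilde{e}$ (which the paper gets in one line by noting that identifying them turns $G_2'$ back into $G_2$, and which you verify explicitly by the split-edge case analysis), the complementary fact that every $k$-coloring of $G_1-\tilde{e}$ merges them, and the identical two-case gluing argument for criticality via Proposition~\ref{prop_G-e}. One harmless slip: a $(k+1)$-critical hypergraph need only have $k+1$ vertices (e.g.\ $K_{k+1}$), not $k+2$ as you claim, but the correct bound $|G_i|\geq k+1\geq 3$ already gives both sides of the separation more than two vertices, exactly as in the paper.
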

\begin{proof}
Let $\tilde{e}=uw$ and $G_1'=G_1-\tilde{e}$. Then, $G$ is the union of the two induced subgraphs $G_1'$ and $G_2'$ with $V(G_1')\cap V(G_2')=\{u,w\}$ and $|G_i'|>2$ as $|G_i|\geq k+1\geq 3$. So $S=\{u,w\}$ is a separating set of $G$. Furthermore, $G_1$ is obtained from $G_1'$ by adding the edge $uw$, and $G_2'$ is obtained from $G_2$ by identifying $u$ and $v$ to the new vertex $\tilde{v}$. Since $\cn(G_2)=k+1$ and $\cn(G_2')\leq k$, each coloring $\f_2\in \cC_k(G_2')$ satisfies $\f_2(u)\not= \f_2(w)$. Since $G_1$ is $(k+1)$-critical and $G_1'=G_1-uw$, each coloring $\f_1\in \cC_k(G_1')$ satisfies $\f_1(u)=\f_1(w)$. Consequently, $\cn(G)\geq k+1$. Now let $e$ be an arbitrary edge of $G$. It remains to show that $\cn(G-e)\leq k$. First assume that $e$ belongs to $G_1'$ and hence to $G_1$. As $G_1$ is $(k+1)$-critical, there is a coloring $\f_1\in \cC_k(G_1-e)$ and so $\f_1(u)\not=\f_1(w)$. There is a coloring $\f_2\in \cC_k(G_2')$ and $\f_2(u)\not=\f_2(w)$. By permuting colors if necessary, $\f_1 \cup \f_2$ is a $k$-coloring of $G-e$. Now assume that $e$ belongs to $G_2'$ and let $e'$ be the corresponding edge of $G_2$. As $G_2$ is $(k+1)$-critical, there is a coloring $\f_2\in \cC_k(G_2-e')$ which leads to a coloring $\f_2'\in \cC_k(G_2'-e)$ such that $\f_2'(u)=\f_2'(w)=\f_2(\tilde{v})$.  As $G_1$ is $(k+1)$-critical, there is a coloring $\f_1\in \cC_k(G_1-\tilde{e})$ and so $\f_1(u)=\f_1(w)$.  By permuting colors if necessary, $\f_1 \cup \f_2'$ yields a $k$-coloring of $G-e$. Hence $G$ is $(k+1)$-critical (by Proposition~\ref{prop_G-e}).
\end{proof}

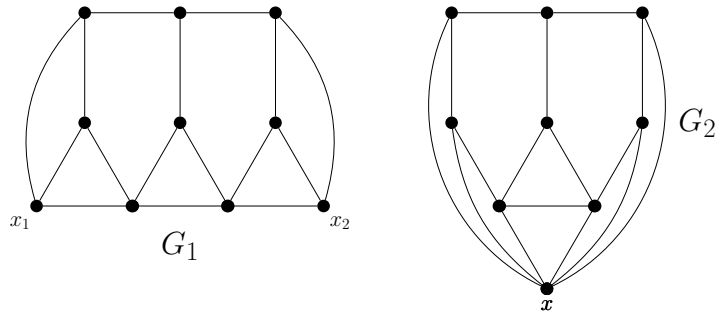
\begin{figure}[htbp]
\centering
\usetikzlibrary[graphs, arrows, backgrounds, intersections, positioning, fit, petri, calc, shapes, decorations.pathmorphing]


%
\resizebox{.7\linewidth}{!}{
\begin{tikzpicture}[node distance=1cm, bend angle=30,
vertex/.style={circle,minimum size=3mm,very thick, draw=black, fill=black, inner sep=0mm}, information text/.style={fill=red!10,inner sep=1ex, font=\Large}, help lines/.style={-,color=black}]

\node[draw=none,minimum size=3cm,regular polygon,regular polygon sides=3] (a) {};
\foreach \x in {1,2,3}
\node[vertex] (a\x) at (a.corner \x) {};

\node[draw=none,minimum size=3cm,regular polygon,regular polygon sides=3, xshift=2.6cm](b){};
\foreach \x in {1,2,3}
\node[vertex] (b\x) at (b.corner \x) {};

\node[draw=none,minimum size=3cm,regular polygon,regular polygon sides=3, xshift=5.2cm](c){};
\foreach \x in {1,2,3}
\node[vertex] (c\x) at (c.corner \x) {};

\node[vertex] (v1) at (a1) [yshift=3cm]{};
\node[vertex] (v2) at (b1) [yshift=3cm]{};
\node[vertex] (v3) at (c1) [yshift=3cm]{};

\path[-]
  (a1) edge [help lines] (a2)
  	   edge [help lines] (a3)
  	   edge [help lines] (v1)
  (b1) edge [help lines] (b2)
  	   edge [help lines] (b3)
  	   edge [help lines] (v2)
  (c1) edge [help lines] (c2)
  	   edge [help lines] (c3)
  	   edge [help lines] (v3)
  (a2) edge [help lines] (a3)
  (b2) edge [help lines] (b3)
  (c2) edge [help lines] (c3)	   
  (v1) edge [help lines] (v2)
  	   edge [help lines, bend right] (a2)
  (v3) edge [help lines] (v2)
  	   edge [help lines, bend left] (c3);     

\node at (a2) [label={[font=\Large]260:$x_1$}] {};
\node at (c3) [label={[font=\Large]-80:$x_2$}] {};
\node at (b1) [label={[font=\huge, yshift=-2.7cm]below:$G_1$}] {};

\begin{scope}[xshift=10cm]
\node[draw=none,minimum size=3cm,regular polygon,regular polygon sides=3] (a) {};
\foreach \x in {1,3}
\node[vertex] (a\x) at (a.corner \x) {};

\node[draw=none,minimum size=3cm,regular polygon,regular polygon sides=3, xshift=2.6cm](b){};
\foreach \x in {1,2,3}
\node[vertex] (b\x) at (b.corner \x) {};

\node[draw=none,minimum size=3cm,regular polygon,regular polygon sides=3, xshift=5.2cm](c){};
\foreach \x in {1,2}
\node[vertex] (c\x) at (c.corner \x) {};

\node[vertex] (v1) at (a1) [yshift=3cm]{};
\node[vertex] (v2) at (b1) [yshift=3cm]{};
\node[vertex] (v3) at (c1) [yshift=3cm]{};

\node[draw=none,minimum size=3cm,regular polygon,regular polygon sides=3, rotate=60, yshift=-3cm](w){};
\foreach \x in {1,2,3}
\node[vertex, label={[font=\Large]below:$x$}] (w1) at (w.corner 2) {};

\path[-]
  (a1) 
  	   edge [help lines] (a3)
  	   edge [help lines] (v1)
  	   edge [help lines, bend angle=22.5, bend right] (w1)
  (b1) edge [help lines] (b2)
  	   edge [help lines] (b3)
  	   edge [help lines] (v2)
  (c1) edge [help lines] (c2)
  	   edge [help lines] (v3)
  	   edge [help lines, bend angle=22.5, bend left] (w1)
  (b2) edge [help lines] (b3)
  	   edge [help lines] (w1)
  (b3) edge [help lines] (w1)	   
  (v1) edge [help lines] (v2)
  	   edge [help lines, bend angle=45, bend right] (w1)
  (v3) edge [help lines] (v2)
  	   edge [help lines, bend angle=45, bend left] (w1);     

\node at (c1) [xshift=1.5cm, font=\huge] {$G_2$};

\end{scope}

\end{tikzpicture}}
%
\caption{Two $4$-critical graphs.}
\label{fig:splitting}
\end{figure}

There are $(k+1)$-critical graphs $G_2$ and vertices $v$ of $G_2$ such that the resulting graph $G_2'$ obtained from $G_2$ by splitting $v$ into an independent set of size at least $2$ satisfies $\cn(G_2')\geq k+1$; in this case $G_2'$ is $(k+1)$-critcal, too. An example with $k=3$ is shown in Figure~\ref{fig:splitting}; both graphs $G_1$ and $G_2$ are $4$-critical and $G_1$ is obtained from $G_2$ by splitting $x$ into the vertex set $\{x_1,x_2\}$. The graph $G_1$ is a Haj\'os join of the form $G=(K_4\triangle K_4) \triangle K_4$ and hence $4$-critical. That $G_2$ is  $4$-critical can also easily be checked by hand using Proposition~\ref{prop_G-e}.

Both Theorems \ref{theorem_splitting_low} and \ref{theorem_splitting_ordinary} are special cases of a more general theorem about the splitting operation for critical hypergraphs. The proof of the next result is almost the same as the proof of the former theorem.

\begin{theorem}\label{theorem_splitting_general}
Let $G_1$ and $G_2$ be two disjoint $(k+1)$-critical hypergraphs with $k \geq 2$, let $\tilde{e} \in E(G_1)$ be an arbitrary edge of $G_1$, and let $\tilde{v} \in V(G_2)$ be an arbitrary vertex. Let $G=S(G_1,\tilde{e},G_2,\tilde{v},s)$ and let $G_2'=G[(V(G_2)\sm \{\tilde{v}\}) \cup \tilde{e}]$. Assume that for every coloring $\f\in \cC_k(G[\tilde{e}])$ with $|\f(\tilde{e})|\geq 2$ there is a coloring $\f'\in \cC_k(G_2')$ such that $\f'|_{\tilde{e}}=\f$. Then, $G$ is a $(k+1)$-critical hypergraph.
\end{theorem}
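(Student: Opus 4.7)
The plan is to mirror the proof of Theorem~\ref{theorem_splitting_ordinary}: by Proposition~\ref{prop_G-e}, it suffices to show $\chi(G) \geq k+1$ and $\chi(G - e) \leq k$ for every $e \in E(G)$. For the lower bound I would assume $\psi \in \mathcal{CO}_k(G)$ and split on $|\psi(\tilde{e})|$. If $|\psi(\tilde{e})| \geq 2$, then $\psi|_{V(G_1)}$ properly colors every edge of $G_1$ (including $\tilde{e}$), contradicting $\chi(G_1) = k+1$. Otherwise $\psi(\tilde{e}) = \{c\}$, and extending by $\psi(\tilde{v}) = c$ should yield a $k$-coloring of $G_2$: for $e'' \in \partial_{G_2}(\tilde{v})$, the modified edge $(e'' \setminus \{\tilde{v}\}) \cup s(e'')$ is properly colored in $G$, and since $s(e'') \subseteq \tilde{e}$ is monochromatic of color $c$, some $u \in e'' \setminus \{\tilde{v}\}$ must satisfy $\psi(u) \neq c$, so $u$ and $\tilde{v}$ witness that $e''$ is properly colored in the extension, contradicting $\chi(G_2) = k+1$.

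For the second part I would fix $e \in E(G)$ and split into two cases. In Case~A, $e \in E(G_1) \setminus \{\tilde{e}\}$: take $\varphi_1 \in \mathcal{CO}_k(G_1 - e)$; since $\tilde{e}$ remains an edge in $G_1 - e$, automatically $|\varphi_1(\tilde{e})| \geq 2$. The restriction $\varphi_1|_{\tilde{e}}$ lies in $\mathcal{CO}_k(G[\tilde{e}])$ (observe that $G[\tilde{e}]$ is edgeless, since $G_1$ is simple and $\tilde{e} \notin E(G)$), so the hypothesis supplies a $\varphi' \in \mathcal{CO}_k(G_2')$ extending it. Because $V(G_1) \cap V(G_2') = \tilde{e}$ and the two colorings agree there, $\varphi_1 \cup \varphi'$ is a well-defined $k$-coloring of $G - e$, as every edge of $G - e$ lies in $E(G_1) \setminus \{\tilde{e}, e\}$ or in $E(G_2')$. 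In Case~B, $e$ corresponds to an edge $e' \in E(G_2)$ (either $e = e' \in E(G_2) \setminus \partial_{G_2}(\tilde{v})$, or $e = (e' \setminus \{\tilde{v}\}) \cup s(e')$ with $e' \in \partial_{G_2}(\tilde{v})$): take $\varphi_2 \in \mathcal{CO}_k(G_2 - e')$, set $c = \varphi_2(\tilde{v})$, and pick $\varphi_1 \in \mathcal{CO}_k(G_1 - \tilde{e})$. Since $G_1$ is $(k+1)$-critical, $\tilde{e}$ is monochromatic under $\varphi_1$; by permuting colors we may assume $\varphi_1(\tilde{e}) = \{c\}$. Gluing $\psi = \varphi_1$ on $V(G_1)$ with $\psi = \varphi_2|_{V(G_2) \setminus \{\tilde{v}\}}$ on $V(G_2) \setminus \{\tilde{v}\}$ then yields a $k$-coloring of $G - e$; the only nontrivial verification is for modified edges $(e'' \setminus \{\tilde{v}\}) \cup s(e'')$ with $e'' \in \partial_{G_2}(\tilde{v}) \setminus \{e'\}$, which is handled exactly as in the lower-bound argument.

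I expect the main difficulty to be organizational: tracking which edges of $G$ originate from $G_1$, from $G_2$, or from the splitting, and checking properness on modified edges in each subcase. The structural point behind the hypothesis is that in Case~B we can propagate a single color $c$ monochromatically across $\tilde{e}$, whereas Case~A forces a non-monochromatic coloring of $\tilde{e}$ on $G_1 - e$; the hypothesis is precisely what allows such a coloring to be completed to a $k$-coloring on $V(G_2')$.
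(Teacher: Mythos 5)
Your proof is correct and follows essentially the route the paper intends: the paper omits the proof of Theorem~\ref{theorem_splitting_general}, noting it is ``almost the same'' as that of the preceding splitting theorems, and your argument is exactly that adaptation — every $k$-coloring of $G$ restricts to one of $G_1-\tilde{e}$, forcing $\tilde{e}$ monochromatic and allowing the color to be pushed back onto $\tilde{v}$ to contradict $\chi(G_2)=k+1$, while criticality splits into the two cases of Theorems~\ref{theorem_splitting_low} and~\ref{theorem_splitting_ordinary}. In particular, you correctly identify that the assumed extension hypothesis on $\mathcal{CO}_k(G_2')$ is needed precisely (and only) in the case $e\in E(G_1)\setminus\{\tilde{e}\}$, where it plays the role that Lemma~\ref{Le:perfect} and the low-vertex property played in the proof of Theorem~\ref{theorem_splitting_low}.
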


A slightly weaker version of the above theorem was already proved by Toft \cite{Toft74}; he only considered the case when $G_2$ is a critical graph and $s$ is a simple splitting. Then, the resulting critical hypergraph $G$ has one hyperedge less. By repeated application of the splitting operation one can finally obtain a critical graph.

Let $G_1, G_2, \tilde{e}, \tilde{v}, G$ and $G_2'$ as in Theorem~\ref{theorem_splitting_general}. As $G_1$ is critical, $G_1$ is a simple hypergraph (by Proposition~\ref{prop_basic-facts}(d)). Hence, $\tilde{e}$ is an independent set of $G$ as well as of $G_2'$ and $G[\tilde{e}]=G_2'[\tilde{e}]$. We then say that $G_2'$ is obtained from $G_2$ by splitting $\tilde{v}$ into the independent set $\tilde{e}$, and write $G_2'=S(G_2,\tilde{v},\tilde{e},s)$.

Let $G$ be a $(k+1)$-critical hypergraph with $k\geq 2$, and let $v$ be a vertex of $G$. We say that $v$ is a \textbf{universal vertex} of $G$, if for every hypergraph $G'=S(G,v,X,s)$, where $X$ is a set, and every coloring $\f'\in \cC_k(G'[X])$ with $|\f'(X)|\geq 2$ there is a coloring $\f\in \cC_k(G)$ with $\f|_X=\f'$.

Theorem~\ref{theorem_splitting_general} then implies that if $G_1$ and $G_2$ are disjoint $(k+1)$-critical hypergraphs, and $\tilde{v}$ is a universal vertex of $G_2$, then any hypergraph $G$ obtained from $G_1$ and $G_2$ by splitting $\tilde{v}$ into an edge $\tilde{e}$ of $G_2$ is a $(k+1)$-critical hypergraph, too. However, a good characterization of universal vertices in critical hypergraphs or graphs seems not available. From the proof of Theorem~\ref{theorem_splitting_low} it follows that any low vertex of a $(k+1)$-critical hypergraph with $k\geq 2$ is universal. Further cases were given by Toft in \cite{Toft70} and \cite{Toft74}.

Next to the Haj\'os construction there is another construction for critical hypergraphs, first used by Dirac for critical graphs (see Gallai \cite[(2.1)]{Gal63a}). Let $G_1$ and $G_2$ be two disjoint hypergraphs, and let $G$ be the hypergraph obtained from the union $G_1 \cup G_2$ by adding all ordinary edges between $G_1$ and $G_2$, that is, $V(G)=V(G_1) \cup V(G_2)$ and $E(G)=E(G_1) \cup E(G_2) \cup \set{uv}{u\in V(G_1), v\in V(G_2)}$. We call $G$ the \textbf{Dirac sum}, or the \textbf{join} of $G_1$ and $G_2$ and write $G=G_1\boxtimes G_2$. Then it is straightforward to show that $\cn(G)=\cn(G_1)+\cn(G_2)$, and, moreover, $G$ is critical if and only if both $G_1$ and $G_2$ are critical. For example, $KC_{n,p}=K_n \boxtimes C_{2p+1}$ is a $(n+3)$-critical graph and, as proved by Toft \cite{Toft74}, each high vertex of $KC_{n,p}$ is universal. These graphs enable us to construct from any $(k+1)$-critical hypergraph with $k\geq 3$ and copies of $KC_{k-2,p}$ a $(k+1)$-critical graph. Note that if $G=S(G_1,\tilde{e},G_2,\tilde{v},s)$ and $s$ is a simple splitting, then $d_{G_2}(\tilde{v})\geq |\tilde{e}|$. One popular example of a critical graph obtained from a critical hypergraph was presented by Toft \cite{Toft70}. For $i\in \{1,2\}$, let $G_i$ be a connected hypergraph with one edge $e_i$ of size $2p+1$, so $G_i$ is a $2$-critical hypergraph. Then the Dirac sum $G'=G_1 \boxtimes G_2$ is a $4$-critical hypergraph. If we now apply the splitting operation with two copies of the odd wheels $KC_{1,p}$ and the high vertex $v$, that is, we first construct $\tilde{G}=S(G',e_1,KC_{1,p},v,s)$ with a simple splitting $s$ and then $G=S(\tilde{G},e_2,KC_{1,p},v,s')$ with a simple splitting $s'$, then the resulting graph $G$ is a $4$-critical graph of order $n=8p+4$ and with $m=(2p+1)^2+8p+4=\frac{1}{16}n^2+n$ edges, i.e., $G$ has many edges. The constant $\frac{1}{16}$ has not been improved.

\section{Concluding remarks}

Surprisingly, we are not able to characterize the hypergraphs with $\lambda=2$ and $\cn=3$. If $\cH_2$ denotes the smallest class of hypergraphs that contains all hyperwheels and is closed under taking Haj\'os joins, then it is easy to show that $\cH_2$ is contained in the class $\cc_2$ of $3$-critical hypergraphs with $\lambda\leq 2$. As proved in Claim~\ref{claim_sep-size>2} if $G$ belongs to $\cc_k$ with $k\geq 3$ and $G$ has no separating vertex set of size at most $2$, then $G$ is a base graph of $\cH_k$, that is, either $k=3$ and $G$ is an odd wheel or $k\geq 4$ and $G$ is a $K_{k+1}$. However, there are hypergraphs in $\cc_2$ that do not have a separating vertex set of size at most $2$, but that are different from hyperwheels.  Examples of such $3$-critical hypergraphs can be obtained as follows. Let $T$ be an arbitrary rooted  tree such that the root has degree at least $2$ and the  distance between the leafs of $T$ and the root all have the same parity. If $G$ is the hypergraph obtained from $T$ by adding the hyperedge consisting of the leafs of $T$, then it is easy to check that $G\in \cc_2$. If the non-leaf vertices of $T$ have degree at least $3$, then $G$ has no separating vertex set of size at most $2$; one such hypergraph is shown in Figure~\ref{fig:superwheel}. On the other hand, $G$ belongs to $\cH_2$, and we do not know any hypergraph belonging to $\cc_2$, but not to $\cH_2$. If $G\in \cc_2$ then $G$ has a separating edge set of size $2$, and  according to Theorem~\ref{theorem_edge-cut} the hypergraph $G$ can be decomposed into two $3$-critical hypergraphs $G_1$ and $G_2$. It can easily be shown that $\lambda(G_i)\leq 2$ for $i\in \{1,2\}$ implying that both $G_1$ and $G_2$ belong to $\cc_2$. The problem is the converse splitting operation.

\begin{figure}[htbp]
\centering
\usetikzlibrary[graphs, arrows, backgrounds, intersections, positioning, fit, petri, calc, shapes, decorations.pathmorphing]


%
\resizebox{.35\linewidth}{!}{
\begin{tikzpicture}[node distance=1cm, bend angle=30,
vertex/.style={circle,minimum size=3mm,very thick, draw=black, fill=black, inner sep=0mm}, information text/.style={fill=red!10,inner sep=1ex, font=\Large}, help lines/.style={-,color=black}]

\node[vertex] (v1) {};

\node[draw=none,minimum size=3cm,regular polygon,regular polygon sides=3] (a) {};
\foreach \x in {1,2,3}
\node[vertex] (a\x) at (a.corner \x) {};

\node at (v1) [draw=none,minimum size=2cm,regular polygon,regular polygon sides=3, yshift=2.5cm, rotate=60] (b) {};
\foreach \x in {1,2,3}
\node[vertex] (b\x) at (b.corner \x) {};

\node at (v1) [draw=none,minimum size=2cm,regular polygon,regular polygon sides=3, yshift=-1.25cm, xshift=-2.2cm, rotate=60] (c) {};
\foreach \x in {1,2,3}
\node[vertex] (c\x) at (c.corner \x) {};

\node at (v1) [draw=none,minimum size=2cm,regular polygon,regular polygon sides=3, yshift=-1.25cm, xshift=2.2cm, rotate=60] (d) {};
\foreach \x in {1,2,3}
\node[vertex] (d\x) at (d.corner \x) {};

\path[-]

(v1) edge [help lines] (a1)
	 edge [help lines] (a2)
	 edge [help lines] (a3)
(b2) edge [help lines] (b1)
	 edge [help lines] (b3)
(c3) edge [help lines] (c1)
	 edge [help lines] (c2)
(d1) edge [help lines] (d2)
	 edge [help lines] (d3);

\begin{pgfonlayer}{background}

\node[circle, minimum size=7.5cm, draw=black, fill=black!50, fill opacity=.2] at (v1){};
\node[circle, minimum size=5.5cm, draw=black, fill=white] at (v1) {};

\end{pgfonlayer}


%
%
%
%
%

\end{tikzpicture}}
%
\caption{A member in $\cc_2$ without a separating vertex set of size $2$.}
\label{fig:superwheel}
\end{figure}
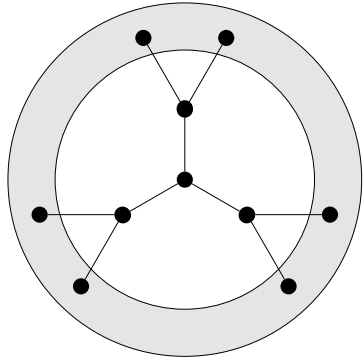

It seems likely that one can obtain a polynomial time algorithm from the proof of Theorem~\ref{theorem_main-result}, which, given a hypergraph $G$ with $\lambda(G)\leq k$ and $k\geq 3$, either finds a $k$-coloring of $G$ or a block belonging to $\cH_k$. We did not explore this question.


\begin{thebibliography}{99}

\bibitem{AlboukerV2016}
P.~Aboulker, N.~Brettell, F.~Havet, D.~Marx, and N.~Trotignon, Colouring graphs with constraints on connectivity, \emph{ J. Graph Theory} {\bf 85} (2017) 814--838.

\bibitem{Frank11}
A.~Frank, {\it Connections in Combinatorial Optimization}, Oxford University Press 2011, Oxford Lecture Series in Mathematics and its Application {\bf 38}.

\bibitem{brooks}
R.~L.~Brooks,
On colouring the nodes of a network, \emph{Proc. Cambridge Philos. Soc., Math. Phys. Sci.}  \textbf{37} (1941) 194--197.

\bibitem{Dir52}
G.~A.~Dirac,
A property of 4-chromatic graphs and some remarks on critical graphs,
\emph{J. London Math. Soc.} \textbf{27} (1952) 85--92.

\bibitem{Dir53}
G.~A.~Dirac,
The structure of $k$-chromatic graphs,
\emph{Fund. Math.} \textbf{40} (1953) 42--55.
%
%
\bibitem{ErdHaj66}
P.~Erd\H os and A.~Hajnal, On the chromatic number of graphs and set-systems,
\emph{Acta Math. Acad. Sci. Hungar.} \textbf{17} (1966) 61--99.
%
%
%
\bibitem{Gal63a} T.~Gallai, Kritische Graphen I.
{\em Publ. Math. Inst. Hungar. Acad. Sci.} {\bf 8}
(1963), 165-192.



\bibitem{Hajos61}
G.~Haj\'os,
\"Uber eine Konstruktion nicht $n$-f\"arbbarer Graphen.
{\em Wiss. Z. Martin Luther Univ. Halle-Wittenberg, Math.-Natur. Reihe}
{\bf 10} (1961), 116--117.

\bibitem{Jones}
R.~P.~Jones, Brooks' theorem for hypergraphs. In: Proc. 5th British Comb. Conf.,
\emph{Congr. Numer.} {\bf XV}  (1975) 379--384.

\bibitem{Kir03}
T.~Kir\'aly, {\it Edge-connectivity  of undirected and directed hypergraphs}, PhD thesis, E\"otv\"os Lor\'and University, Budapest, 2003.


\bibitem{KosStieb03}
A.V.~Kostochka and M.~Stiebitz, A new lower bound on the number of edges in colour-critical graphs and hypergraphs,
\emph{J. Combin. Theory Ser. B} \textbf{87} (2003) 374--402.



\bibitem{Lov68}
L.~Lov\'asz, On chromatic number of finite set-systems,
\emph{Acta Math. Acad. Sci. Hungar.} \textbf{19} (1968) 59--67.


\bibitem{Lov73}
L.~Lov\'asz, Coverings and colorings of hypergraphs. {\em Congr. Numer.} {\bf VIII} (1973), 3--12.

%
%


\bibitem{StiToft18}
M.~Stiebitz and B.~Toft, A Brooks type theorem for the maximum local edge connectivity, {\em Electr. J. Combin.} {\bf 25} (2018), paper P1.50

\bibitem{Toft70}
B.~Toft, {\it Some contribution to the theory of colour-critical graphs}, PhD thesis, University of London 1970. Published as No. 14 in Various Publication Series, Mathematisk Institut, Aarhus Universitet 1970.

\bibitem{Toft74}
B.~Toft, Colour-critical graphs and hypergraphs, {\em J. Combin. Theory Ser. B}
{\bf 16} (1974), 145--161.

%


\end{thebibliography}
\end{document}